\author{Anthony H. Dooley and Kieran Jarrett}
\newtheoremstyle{standard}
  {11pt} % Space aboveamountne
  {} % Space below
  {\itshape} % Body font
  {} % Indent
  {\bfseries} % Theorem head font
  {} % Punctuation after theorem head
  {.5em} % Space after theorem head
  {} % Theorem head spec (can be left empty, meaning `normal'
\theoremstyle{standard}
\newtheorem{theorem}{Theorem}[section]
\newtheorem{lem}[theorem]{Lemma}
\newtheorem{cor}[theorem]{Corollary}
\newtheorem{defn}[theorem]{Definition}
\newtheorem{prop}[theorem]{Proposition}
\newtheorem*{theorem*}{Theorem}
\newtheoremstyle{rem}
  {9pt} % Space above
  {} % Space below
  {} %  Body font
  {} % Indent amount
  {\bfseries} % Theorem head font
  {} % Punctuation after theorem head
  {.5em} % Space after theorem head
  {} % Theorem head spec (can be left empty, meaning `normal'
\theoremstyle{rem}
\newtheoremstyle{rems}
  {9pt} % Space above
  {} % Space below
  {} % Body font
  {} % Indent amount
  {\bfseries} % Theorem head font
  {} % Punctuation after theorem head
  {.5em} % Space after theorem head
  {} % Theorem head spec (can be left empty, meaning `normal'
\theoremstyle{rems}
\newtheoremstyle{e.g.}
  {9pt} % Space above
  {} % Space below
  {} % Body font
  {} % Indent amount
  {\itshape} % Theorem head font
  {} % Punctuation after theorem head
  {.5em} % Space after theorem head
  {} % Theorem head spec (can be left empty, meaning `normal'
\theoremstyle{e.g.}
\newtheorem*{e.g.}{E.g.}
\newcommand{\ind}{\textbf{1}}
\newcommand{\Zd}{\mathbb{Z}^{d}}
\newcommand{\Zto}{\mathbb{Z}^{2}}
\DeclarePairedDelimiter\ceil{\lceil}{\rceil}
\newcommand{\rad}{\textup{rad\,}}
\newcommand{\rmin}{\textup{rmin\,}}
\newcommand{\rmax}{\textup{rmax\,}}
\newcommand{\cdim}{\textup{cdim}}
\DeclarePairedDelimiter\floor{\lfloor}{\rfloor}
\begin{document}

\title{Non-singular $\mathbb{Z}^d$-actions: an ergodic theorem over rectangles with application to the critical dimensions}
\date{}
\maketitle

\begin{abstract}
We adapt techniques of Hochman to prove a non-singular ergodic theorem for $\mathbb{Z}^d$-actions where the sums are over rectangles with side lengths increasing at arbitrary rates, and in particular are not necessarily balls of a norm. This result is applied to show that the critical dimensions with respect to sequences of such rectangles are invariants of metric isomorphism. These invariants are calculated for a class of product actions.
\end{abstract}

\section{Introduction}

Let $G$ be a countable group with a non-singular action on a standard probability space $(X,\mathcal{B},\mu)$, which is assumed to be ergodic throughout. Each $g \in G$ induces a non-singular map on $X$ which we also denote by $g$. The measures $\mu$ and $\mu \circ g$ are equivalent and so the Radon-Nikodým derivative
$$\omega_g = \frac{d \mu \circ g}{d \mu}$$
is well defined and strictly positive almost everywhere. In turn each $g \in G$ induces an isometric linear action on $L^1$ given by $\hat{g}\phi(x) = \phi(gx)\omega_g(x)$.

For conservative integer actions the Hurewicz ergodic theorem states that for $\phi \in L^1$ 
$$\lim_{n \rightarrow \infty} \frac{\sum_{i=1}^n \phi(ix)\omega_i(x)}{\sum_{i=1}^n \omega_i(x)} = \int \phi \, d\mu$$
almost everywhere. Since the action is conservative if $\phi > 0$ a.s. then both the numerator and the denominator in the theorem diverge to infinity. Therefore the ergodic theorem says that both are diverging at the same rate. This suggests that the growth rate of $\sum_{i=1}^n \omega_i$ may encode some intrinsic behaviour of the system. This motivated work by Dooley and Mortiss \cite{DoolMort2,DoolMort3,DoolMort1,Mort1} where they conducted a rigorous study of the growth rate of $\sum_{i=1}^n \omega_i$ and created invariants called the upper and lower critical dimensions.We aim to extend this study from the context of $\mathbb{Z}$-actions to those of other countable groups, with $\mathbb{Z}^d$-actions being the focus of this paper. The critical dimensions are defined for a countable group $G$ as follows. 

Fix a sequence $e \in B_1 \subseteq B_2 \subseteq ...$ of finite subsets of $G$, we will refer to such a sequence as a \textit{summing sequence}. For $t \in \mathbb{R}$ write
\begin{align*} 
L_t &= \left\lbrace x \in X : \liminf_{n \to \infty} \frac{1}{| B_n |^t} \sum_{g \in B_n} \omega_g(x) > 0 \right\rbrace
\end{align*}
and
\begin{align*}
U_t &= \left\lbrace x \in X : \limsup_{n \to \infty} \frac{1}{| B_n |^t} \sum_{g \in B_n} \omega_g(x) = 0 \right\rbrace.
\end{align*}
Observe that $L_t$ and $U_t$ are decreasing and increasing respectively with $t$, and are disjoint.

\begin{defn}
The \emph{lower critical dimension} of $(X,\mu,G)$ with respect to summing sequence $\lbrace B_n \rbrace_{n=1}^\infty$ is defined by $$\alpha = \alpha(B_n) = \sup \lbrace t : \mu(L_t) = 1 \rbrace.$$
The \emph{upper critical dimension} of $(X,\mu,G)$ with respect to $\lbrace B_n \rbrace_{n=1}^\infty$ is defined by $$\beta = \beta(B_n) = \inf \lbrace t : \mu(U_t) = 1 \rbrace.$$
When $\alpha$ and $\beta$ coincide we let $\gamma = \alpha = \beta$ and call it the \emph{critical dimension}.
\end{defn}

Intuitively, the lower critical dimension gives the slowest growth rate of all the subsequences of $\sum_{g \in B_n}\omega_g(x)$, and the upper critical dimension the fastest. It follows from the definitions that $0 \leq \alpha \leq \beta$ and from Fatou's lemma that $\alpha \leq 1$.

When $G = \mathbb{Z}$ the sets $B_n$ are normally taken to be the intervals $[1,n]$, considered as a subset of $\mathbb{Z}$, in analogy with range of the sums in the ergodic theorem. However, in the case of a general countable group there is not such a clear choice for $B_n$. This raises the question: how does the choice of the summing sequence affect the critical dimensions? 

We start to address this question in section \ref{prodact} by examining product $\mathbb{Z}^d$-actions on spaces $X = X_1 \times ... \times X_d$, where each $X_i$ has an associated transformation $T_i$. We consider the critical dimensions with respect to sequences of \textit{rectangles} ${B_n = B_n^1 \times ... \times B_n^d}$ where each $B_n^i = [-s_i(n),s_i(n)]$ for some increasing ${s_i : \mathbb{N}_0 \rightarrow \mathbb{N}_0}$. Note that we are requiring rectangles to be symmetric about the origin. For each $1 \leq i \leq d$ we write $\gamma_i$ for the single critical dimension (if it exists) of $T_i$ with respect to $[-n,n]$. Our main result in this section, theorem \ref{CDprodact}, shows that for these actions the critical dimension $\gamma(B_n)$ of the product action is a weighted average of the $\gamma_i$, with weightings determined by relative growth rates of the functions $s_i$. 

\newpage 

\begin{theorem} \label{=case}
Suppose that for function $s : \mathbb{N} \rightarrow \mathbb{N}$ the limits $c_i = \lim_{n \rightarrow \infty} \frac{\log{s_i(n)}}{\log{s(n)}}$ exist, and that one of these is non-zero. Then 
$$\gamma(B_n) = \frac{\sum_{i=1}^d c_i \gamma_i}{\sum_{i=1}^d c_i}.$$
\end{theorem}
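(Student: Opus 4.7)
The plan is to show that the log-ratio $R_n(x):=\log\sum_{g\in B_n}\omega_g(x)/\log|B_n|$ converges almost surely to the claimed value $c:=(\sum_i c_i\gamma_i)/(\sum_i c_i)$, and to deduce $\alpha(B_n)=\beta(B_n)=c$ from this. For the reduction: if $R_n(x)\to c$ a.s., then for every $\epsilon>0$ we eventually have $|B_n|^{c-\epsilon}\leq\sum_{g\in B_n}\omega_g(x)\leq|B_n|^{c+\epsilon}$, which forces $\mu(L_{c-\epsilon})=\mu(U_{c+\epsilon})=1$; sending $\epsilon\to 0$ and using $\alpha\leq\beta$ gives the equality. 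Here $|B_n|\to\infty$ is guaranteed by the assumption that some $c_i\neq 0$, which forces the corresponding $s_i(n)\to\infty$.

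Next I would exploit the product structure. Because $\mu=\mu_1\otimes\cdots\otimes\mu_d$ and the action is coordinate-wise, the cocycle factorises as $\omega_g(x)=\prod_{i=1}^d\omega^{(i)}_{g_i}(x_i)$ for $g=(g_1,\ldots,g_d)$, so distributing the product over the sum yields
\begin{equation*}
\sum_{g\in B_n}\omega_g(x)=\prod_{i=1}^d S_n^{(i)}(x_i),\qquad S_n^{(i)}(x_i):=\sum_{g_i\in[-s_i(n),s_i(n)]}\omega^{(i)}_{g_i}(x_i),
\end{equation*}
and $\log|B_n|=\sum_{i=1}^d\log(2s_i(n)+1)$. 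For each $i$ with $c_i>0$, $s_i(n)\to\infty$, and since the $\liminf>0$ and $\limsup=0$ conditions defining $\gamma_i$ along $[-n,n]$ both pass to arbitrary subsequences, taking logs gives $\log S_n^{(i)}(x_i)/\log(2s_i(n)+1)\to\gamma_i$ a.s.\ in $x_i$. For $i$ with $c_i=0$, the upper half of the $\gamma_i$-control gives $\log S_n^{(i)}(x_i)\leq(\gamma_i+\epsilon)\log(2s_i(n)+1)+K_{x_i}$, while $S_n^{(i)}(x_i)\geq\omega^{(i)}_0(x_i)=1$ bounds $\log S_n^{(i)}$ below by $0$; together with $\log s_i(n)=o(\log s(n))$ this yields $\log S_n^{(i)}(x_i)=o(\log s(n))$ a.s.

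Finally, I would invoke Fubini so that the per-coordinate almost-sure statements hold simultaneously at a.e.\ $x$, and then divide numerator and denominator of $R_n(x)$ by $\log s(n)$ to obtain
\begin{equation*}
R_n(x)=\frac{\sum_{i=1}^d\log S_n^{(i)}(x_i)/\log s(n)}{\sum_{i=1}^d\log(2s_i(n)+1)/\log s(n)}\longrightarrow\frac{\sum_{i=1}^d c_i\gamma_i}{\sum_{i=1}^d c_i}.
\end{equation*}
The denominator limit is immediate from $\log s_i(n)/\log s(n)\to c_i$, and the numerator limit from the per-coordinate analysis. The main technical point I anticipate is the careful treatment of coordinates with $c_i=0$: one must confirm that their contribution to the numerator of $R_n$ vanishes after normalising by $\log s(n)$, even when the underlying partial sum $S_n^{(i)}(x_i)$ is itself unbounded. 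This is the place where one needs the interplay between the uniform upper bound furnished by $\gamma_i$ and the growth-rate assumption $\log s_i(n)=o(\log s(n))$.
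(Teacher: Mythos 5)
Your argument is correct in the setting where $\mu$ is genuinely the product measure $\mu_1\otimes\cdots\otimes\mu_d$, and it takes a different route from the paper. You prove directly that $\log\big(\sum_{g\in B_n}\omega_g\big)/\log|B_n|\to\big(\sum_i c_i\gamma_i\big)/\big(\sum_i c_i\big)$ a.s., using the factorisation of the cocycle (the identity (\ref{sumform})) together with the observation that existence of a single critical dimension $\gamma_i$ for $T_i$ is equivalent to the a.s.\ convergence $\log S^{(i)}_m/\log(2m+1)\to\gamma_i$ along the full sequence, which then restricts to the subsequence $m=s_i(n)$; the coordinates with $c_i=0$ are absorbed via $1\le S^{(i)}_m\le C_{x_i}(2m+1)^{\gamma_i+\epsilon}$ (note $\gamma_i=\alpha_i\le 1$ automatically, by the Fatou remark in the introduction, so no extra finiteness hypothesis is needed) and $\log s_i(n)=o(\log s(n))$. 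The paper instead proves the two-sided bounds of proposition \ref{prodCD} in terms of the exponents $a_i,b_i$ relative to a chosen representative of the growth class, and then passes to the stated theorem through proposition \ref{equiprod} and the isomorphism invariance of corollary \ref{CDinvar}, which rests on the ergodic theorem \ref{recET}. Your route is more elementary (no preorder/representative bookkeeping, no ergodic theorem) and does not even require the summing sequence to be rectangular: any increasing side lengths $s_i(n)\to\infty$ satisfying the hypothesis work. What it does not cover is the generality in which the paper actually derives theorem \ref{=case}: theorem \ref{CDprodact} allows $\mu$ to be a non-product measure on $X_1\times\cdots\times X_d$, with the $\gamma_i$ computed for the projections $\mu_i=\mu\circ\pi_i^{-1}$; there $\omega_g$ does not factorise, your first step fails, and one needs proposition \ref{equiprod} together with corollary \ref{CDinvar} (hence the ergodic theorem) to reduce to the product-measure case you treat. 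Two small points to tidy: the inference that some $c_i\neq0$ ``forces $s_i(n)\to\infty$'' is not valid as stated (all the quantities could be eventually constant); what you actually use is that the $s_i$ increase to infinity, which also gives $\log s(n)\to\infty$ and is what kills the constants $K_{x_i}$ in the $c_i=0$ coordinates and makes $|B_n|\to\infty$ in your reduction step.
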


A pair of illustrative applications of this result in the case $d = 2$ are that
\begin{align*}
\gamma([-n,n] \times [-n^2,n^2]) = \frac{\gamma_1 + 2 \gamma_2}{3} & & \text{and} & &
\gamma([-n,n] \times [-\floor{e^n-1},\floor{e^n-1}]) = \gamma_2.
\end{align*}

For integer actions, the first and simplest demonstration of the intrinsic nature of the critical dimensions is due to Mortiss who proved that when $B_n = [1,n]$ they are invariants of metric isomorphism \cite{Mort1}. One of the purposes of this paper is to show that the same argument holds for $\mathbb{Z}^d$-actions with the $B_n$ taken to be rectangles. However, Mortiss' argument made use of the ergodic theorem. 

Given an action of a group $G$ on a finite measure space $(X,\mu)$ and a summing sequence ${B_1 \subseteq B_2 \subseteq ...}$ of finite subsets of $G$ the ergodic theorem is satisfied if for every integrable function $\phi$
\begin{align*}
\lim_{n \rightarrow \infty} \frac{\sum_{g \in B_n} \hat{g}\phi}{\sum_{g \in B_n} \hat{g}1} = \int \phi \, d\mu
\end{align*}
almost everywhere. 

For non-singular actions of countable groups the question of when the ergodic theorem holds is an area of current research. The foremost positive result is due to Hochman \cite{Hoch1}, who proved it holds for free, non-singular and ergodic $\mathbb{Z}^d$-actions and $B_n = \lbrace u \in \Zd : \|u\| \leq n \rbrace$ where $\| \cdot \|$ is a norm on $\mathbb{R}^d$. Crucially, this does not include the case where the $B_n$ are rectangles, as described above, because the $s_i(n)$ may have completely different orders of growth. To apply the arguments of Mortiss verbatim it is therefore necessary to show the ergodic theorem extends to rectangles. This requires care because there are natural choices of $B_n$ for which the ergodic theorem is known to fail. One such, due to Brunel and Krengel \cite{Kren1}, shows the ratio ergodic theorem (a consequence of the ergodic theorem in this context) fails for $B_n = [0,n]^d$ and $d > 1$. The generally cited reason for this failure is that the sets $[0,n]^d$ fail to satisfy the Besicovitch property, as defined in \cite{Hoch1}. However, as noted in \cite{Guz1}, sequences of rectangles with increasing side lengths have the Besicovitch property.

Prior to Hochman's work, Feldman \cite{Fel1} used a simpler method to prove a weaker result; it assumed that each of the standard generators $e_1,...,e_n$ of $\mathbb{Z}^d$ acted conservatively on $X$ and took $\| \cdot \|$ to be the sup-norm, and but otherwise unchanged. Both methods follow the standard approach: one produces a dense subset of $L^1$ for which the theorem holds and then applies a maximal inequality to extend this to all of $L^1$. 

Upon a quick examination of Feldman's proof it becomes apparent that the sets $B_n = [-n,n]^d$ can be replaced by the rectangles $\prod_{i=1}^d [-s_i(n),s_i(n)]$ to produce an appropriate dense set of functions. The maximal inequality is then proved using two key properties. The first is that balls of norms in $\mathbb{R}^d$ satisfy the Besicovitch property, see \cite{Guz1} for a proof. The second is that they satisfy the doubling condition, i.e. $|B_{2n}| \leq C |B_n|$ for some fixed constant $C$. We have already noted our rectangles satisfy the Besicovitch property. Moreover, rectangles $B_n$ satisfy an additive version of doubling condition, i.e.
\begin{align} \label{mDC}
|2B_n| = |B_n + B_n| \leq 2^d |B_n|
\end{align}
where for rectangles $B_n$ and $\lambda \in \mathbb{N}$ we let
$\lambda B_n = \prod_{i=1}^d [-\lambda s_i(n),\lambda s_i(n)]$.
This coincides with the sum of $\lambda$ copies of $B_n$.

By modifying the proof of the maximal inequality in \cite{Fel1} to use (\ref{mDC}) rather than the doubling condition for metrics it can be seen that the maximal inequality holds for rectangles. This means that Feldman's result can be extended so that the sums can be taken over rectangles. We explain this modification in section 2.

It is then natural to ask whether similar changes can be made to Hochman's method of producing a dense set of functions. His approach consistently views $\Zd$ as translation invariant metric space, and so we make a light assumption that our rectangles are balls of \textit{rectangular metrics}, see (\ref{recmet}) for the details. It also makes use of both of the doubling and Besicovitch properties to produce the appropriate dense set of functions, in addition to a type of finite dimensionality property of $\mathbb{Z}^d$ with respect to balls of norms. In section \ref{ergthm} we will set out how one can again replace the standard doubling condition with (\ref{mDC}). We also show that $\Zd$ satisfies a corresponding finite dimensionality property with respect to these rectangles. Taken together, these allow us to adapt Hochman's method to prove an ergodic theorem.

\begin{theorem} \label{recET}
Let $\mathbb{Z}^d$ have a non-singular and ergodic action on a probability space $(X,\mu)$ and $B_n = \lbrace u \in \mathbb{Z}^d : \rho(u,0) \leq n \rbrace$ for some rectangular metric $\rho$ on $\mathbb{Z}^d$. Then for every $\phi \in L^1$ as $n \rightarrow \infty$
\begin{align*}
\frac{\sum_{u \in B_n} \hat{u}\phi}{\sum_{u \in B_n} \hat{u}1} \rightarrow \int \phi \, d\mu.
\end{align*}
\end{theorem}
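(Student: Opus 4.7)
The plan is the standard two-step strategy: establish convergence on a dense subclass of $L^1$ and then extend to all of $L^1$ using a ratio maximal inequality. The maximal inequality is already available from section 2, where the modification of Feldman's argument using the additive doubling bound (\ref{mDC}) in place of the usual multiplicative doubling condition is carried out, together with the Besicovitch property of rectangles noted in \cite{Guz1}. By the Banach principle, once the ratio $(\sum_{u\in B_n}\hat{u}\phi)/(\sum_{u\in B_n}\hat{u}1)$ is shown to converge to $\int \phi\,d\mu$ for every $\phi$ in some $L^1$-dense subset, it automatically does so for all $\phi\in L^1$, so all of the work lies in producing such a dense set.

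For the dense set I would adapt Hochman's construction in \cite{Hoch1}. His argument produces approximants of a given $\phi\in L^1$ whose ratio averages along the balls have small oscillation, and its ingredients are (i) the Besicovitch property, (ii) a doubling-type bound of the form $|2B_n|\leq C|B_n|$, and (iii) a finite-dimensionality statement bounding the number of distinct ``shapes'' of balls $B_m$ that appear inside translates of $B_n$ by a quantity polynomial in $n$ (or $n/m$). Ingredient (i) is already verified for rectangles, and ingredient (ii) is exactly (\ref{mDC}). So the task divides into two: confirm that each place where Hochman invokes the multiplicative doubling condition can be replaced by (\ref{mDC}) (which largely reduces to routine bookkeeping, since the relevant coverings and iterated dilations only require control of the volume of finitely many additive copies of $B_n$), and establish a rectangular analog of (iii).

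The main obstacle is (iii). Because the side lengths $s_i(n)$ may have completely different growth rates, the rectangles $B_n$ can be arbitrarily elongated, so arguments that rely on the approximate isotropy of norm-balls do not transfer. My plan is to exploit the product structure: coordinate by coordinate the problem reduces to counting subintervals of $[-s_i(n),s_i(n)]$ that can contain a translate of $[-s_i(m),s_i(m)]$, which is trivially polynomial in $s_i(n)$, and taking the $d$-fold product yields a polynomial bound of the required form. With this in place, Hochman's machinery can be transported essentially line by line to the rectangular setting, replacing the metric doubling condition by (\ref{mDC}) and the norm-ball finite-dimensionality estimate by its rectangular analog, thereby producing the desired dense subspace of $L^1$ on which the ratio theorem holds. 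Combining with the maximal inequality from section 2 then yields theorem \ref{recET}.
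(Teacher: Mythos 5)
Your overall architecture (maximal inequality from section 2 plus convergence on a dense class, then the standard density argument) matches the paper, but the heart of the proof is misidentified and the step you label as ``(iii)'' is not the right property, so the plan as written has a genuine gap. What Hochman's construction actually needs is not a polynomial count of the ``shapes'' of balls $B_m$ fitting inside translates of $B_n$; it is his notion of \emph{finite coarse dimension}, a recursive statement about iterated thickened boundaries: one must show that one cannot have a long chain of points $z_1,\dots,z_k$ with each $z_i$ lying in all the previous thickened boundaries $\partial_{t(j)}B_{r(j)}(z_j)$ and with all radii large compared to the scale, i.e.\ that sufficiently many such thick boxes have empty intersection. Your coordinate-wise ``count the subintervals of $[-s_i(n),s_i(n)]$ containing a translate of $[-s_i(m),s_i(m)]$'' argument establishes nothing of this kind, so the covering theorem that drives Hochman's Chacon--Ornstein-type lemma is not available under your plan.

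Moreover, the genuinely new issue for rectangles, which your proposal never confronts, is that Hochman's definition of the thickened sphere as $B_{r+t}\setminus B_{r-t}$ is the wrong object here: when one side grows, say, exponentially in the radius, even a fixed thickness $t$ makes $B_{r+t}\setminus B_{r-t}$ occupy more than half of $B_r$, destroying the smallness-of-boundary hypotheses in the covering theorem and in the boundary ergodic lemma. The paper's fix is to redefine $\partial_t B$ as the set of points within rectangular distance $t$ of the topological boundary of the rectangle, and then to prove, by induction on $d$ using superadditivity of the inverse side functions $f_i$, that $\mathbb{Z}^d$ still has finite coarse dimension for this boundary; with that in hand one proves $\sum_{u\in\partial_t B_n}\omega_u\big/\sum_{u\in B_n}\omega_u\to 0$ a.s., which (since $\partial_t B_n\supseteq B_n\triangle(v+B_n)$ for $t\geq\rho(0,v)$) yields the non-singular F{\o}lner condition and hence convergence on the dense class $\lbrace c+h-\hat{\sigma}h : h\in L^\infty\rbrace$. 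You would need to supply both the corrected boundary notion and the finite coarse dimension argument (or some substitute of equal strength); without them, ``transporting Hochman's machinery line by line'' is not justified, and you also never state precisely which convergence assertion you intend to prove on your dense class.
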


With this result in hand, the arguments of Mortiss can be applied to see the critical dimensions of summing sequences of rectangles are invariants of metric isomorphism.

\begin{cor} \label{CDinvar}
The upper and lower critical dimensions with respect to any summing sequence of balls $B_n = \lbrace u \in \mathbb{Z}^d : \rho(u,0) \leq n \rbrace$, for some rectangular metric $\rho$, are invariants of metric isomorphism.
\end{cor}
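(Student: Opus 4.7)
The plan is to transcribe Mortiss' argument for $\mathbb{Z}$-actions into the $\mathbb{Z}^d$ setting, substituting Theorem \ref{recET} for the one-dimensional Hurewicz theorem. Let $\pi : X \to Y$ be a metric isomorphism between non-singular ergodic $\mathbb{Z}^d$-systems $(X,\mu)$ and $(Y,\nu)$, i.e.\ a measurable bijection (mod null sets) commuting with the two actions such that $\pi_* \mu \sim \nu$. Set $h = d(\pi_*\mu)/d\nu$; then $h > 0$ a.e., and since $\mu$ and $\nu$ are probability measures, $h \in L^1(\nu)$ with $\int h\, d\nu = 1$. Denote by $\omega^X_g$ and $\omega^Y_g$ the Radon-Nikod\'ym cocycles of the two actions.

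A routine chain-rule computation, using the equivariance $\pi(gx) = g\pi(x)$, gives the cocycle identity
\begin{align*}
\omega^X_g(x) = \omega^Y_g(\pi(x)) \cdot \frac{h(g\pi(x))}{h(\pi(x))}.
\end{align*}
Summing over $g \in B_n$ and recognising $h(g\pi(x))\omega^Y_g(\pi(x)) = \hat g h(\pi(x))$ yields
\begin{align*}
\sum_{g \in B_n} \omega^X_g(x) \;=\; \frac{1}{h(\pi(x))} \sum_{g \in B_n} \hat g h(\pi(x)).
\end{align*}
Now apply Theorem \ref{recET} to the integrable function $h$ on $(Y,\nu)$: for $\nu$-a.e.\ $y$,
\begin{align*}
\frac{\sum_{g \in B_n} \hat g h(y)}{\sum_{g \in B_n} \omega^Y_g(y)} \;\longrightarrow\; \int h\, d\nu \;=\; 1.
\end{align*}
Because $\pi$ is a measure-class isomorphism, this holds for $\mu$-a.e.\ $x$ with $y = \pi(x)$, giving
$$\frac{\sum_{g \in B_n} \omega^X_g(x)}{\sum_{g \in B_n} \omega^Y_g(\pi(x))} \longrightarrow \frac{1}{h(\pi(x))} \in (0,\infty).$$

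The two denominators therefore have the same polynomial growth rate in $|B_n|$ for a.e.\ $x$, so $\pi^{-1}(L^Y_t) = L^X_t$ and $\pi^{-1}(U^Y_t) = U^X_t$ up to null sets. Since $\pi_*\mu = h\nu$ with $h > 0$ a.e.\ and $\int h\, d\nu = 1$, a set $A \subseteq Y$ has $\nu(A) = 1$ iff $\mu(\pi^{-1}(A)) = 1$; consequently $\mu(L^X_t) = 1 \iff \nu(L^Y_t) = 1$ and likewise for $U_t$, giving $\alpha^X(B_n) = \alpha^Y(B_n)$ and $\beta^X(B_n) = \beta^Y(B_n)$. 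The only genuine obstacle in this scheme is the availability of an ergodic theorem for sums over rectangular balls, which is precisely Theorem \ref{recET}; once that is in hand, the remaining work is the elementary Radon-Nikod\'ym bookkeeping above, together with the observation that the transfer function $h$ is automatically in $L^1(\nu)$ because both spaces carry probability measures.
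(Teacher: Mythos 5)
Your proposal is correct and is essentially the paper's own argument: the paper simply invokes Mortiss' method verbatim with Theorem \ref{recET} replacing the Hurewicz theorem, which is exactly the chain-rule/cocycle computation and $L^1$ transfer-function argument you carry out. Your write-up just makes explicit the bookkeeping the paper leaves to the reference \cite{Mort1}.
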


\section{The ergodic theorem for rectangles} \label{ergthm}

In the standard proof for ergodic theorems there are two key ingredients. The first is a maximal inequality. For $\phi \in L^1(X)$ let 
$$R_n\phi(x) = \frac{\sum_{g \in B_n} \phi(gx)\omega_g(x) }{\sum_{g \in B_n} \omega_g(x)}.$$
The maximal inequality holds if there exits $C > 0$ such that for any $\phi \in L^1$ and $\epsilon > 0$
$$\mu\left( \sup_{n \geq 1} \left| R_n\phi \right| > \epsilon \right) \leq \frac{C}{\epsilon} \|\phi\|_1.$$
The second key ingredient there is dense subset $H$ of $L^1$ such that for all $h \in H$ and all $\sigma \in G$
\begin{align} \label{cnv}
\frac{\sum_{g \in B_n \setminus \sigma B_n} \hat{g}h - \sum_{g \in \sigma B_n \setminus  B_n} \hat{g}h}{\sum_{g \in B_n} \hat{g}h} \to 0 
\end{align}
almost surely.

With these in hand the ergodic theorem can be proved as follows. Consider the set
$$D = \lbrace c + h - \hat{\sigma}h : c \in \mathbb{R}, \sigma \in G, h \in H \rbrace.$$
Using a standard argument laid out in \cite{Aar1}, which uses the density of $H$, one can see that $D$ is dense in $L^1$. Moreover
$$\frac{\sum_{g \in B_n} \hat{g}(c+h - \hat{\sigma}h)}{\sum_{g \in B_n} \hat{g}1} 
= c + \frac{\sum_{g \in B_n \setminus \sigma B_n} \hat{g}h - \sum_{g \in \sigma B_n\setminus  B_n} \hat{g}h}{\sum_{g \in B_n} \hat{g}1}$$ 
Therefore by (\ref{cnv}) the left hand side converges to $c$ almost surely. 

Choose $c_m + h_m - \hat{v}h_m \in D$ such that $\| \phi - c_m + h_m - \hat{v}h_m \|_1 < m^{-1}$ then by the above combined with the maximal inequality
$$\mu \left( \limsup_{n \to \infty} |R_n\phi - c_m| > \epsilon \right) =  \mu \left( \limsup_{n \to \infty} |R_n(\phi - c_m + h_m - \hat{v}h_m)| > \epsilon \right) \leq \frac{C}{\epsilon m}.$$
Note that $c_m = \int c_m + h_m - \hat{v}h_m \,d\mu \to \int \phi \, d\mu$ as $m \to \infty$. Hence choosing $m$ large enough for $|c_m - \int \phi \, d\mu| < \epsilon$ we see that
$$\mu \left( \limsup_{n \to \infty} |R_n\phi - c| > 2\epsilon \right) \leq \frac{C}{\epsilon m}$$
for all $m$ sufficiently large. Therefore the left hand side is $0$ for all $\epsilon > 0$, which proves the theorem.

In the case where $H = L^\infty$ condition (\ref{cnv}) is implied by 
\begin{align*}
\frac{\sum_{g \in B_n \triangle \sigma B_n} \omega_g}{\sum_{g \in B_n} \omega_g} \rightarrow 0 \qquad \text{a.s.} \tag{nsFC} \label{nsFC}
\end{align*}
which we call the \textit{non-singular F{\o}lner Condition}. In the case that the action is measure preserving this reduces to the standard F{\o}lner condition for the sequence $B_n$, implying that $G$ is amenable. For integer actions if $B_n = [1,n]$ then (nsFC) follows from the Chacon-Ornstein lemma, see for example \cite{Aar1}, and the assumption that the action is conservative. Hochman's variant of the Chacon-Ornstein lemma in \cite{Hoch1}, summing over balls of norms, also implies (nsFC). 

It should be noted that Feldman's argument shows (\ref{cnv}) directly for a smaller set that $L^\infty$, rather than via (nsFC).

We wish to consider the sums over balls $B_n$ of metrics on $\Zd$ which take the form 
\begin{align} \label{recmet}
\rho(u,v) = \max_{1 \leq i \leq d} F_i(|u_i - v_i|)
\end{align}
where each $F_i : [0,\infty) \to [0,\infty)$ (as subsets of $\mathbb{R}$) satisfies $F_i(0) = 0$ and is subadditive; these properties ensure $d$ is a metric. We also assume that each $F_i$ is strictly increasing, so has an inverse which we denote by $f_i$. It follows that $f_i$ is superadditive on $[0,\infty)$, and hence $f_i(n) \geq n f_i(1)$ for all $n \in \mathbb{N}$. 

If $\rho$ satisfies these conditions then we will refer to it as a \textit{rectangular metric on $\mathbb{Z}^d$}. For a subset $S \subseteq \Zd$ with metric $\rho$ we say $\rho$ is a \textit{rectangular metric on $S$} if it is the restriction of a rectangular metric on $\Zd$ to $S$. In this case we say $(S,\rho)$ is a \textit{rectangular metric space}. 

We will refer to the balls $B_r(z)$ of rectangular metrics as \textit{rectangles} which we assume carry the information of their centre and radius with them. A crucial property to note is that rectangular metrics are translation invariant. This means $B_r(z) = z + B_r$ where 
$$B_r = B_r(0) = \prod_{i=1}^d [-\floor{f_i(r)},\floor{f_i(r)}].$$ We will mainly be focussed on rectangles with $r = n \in \mathbb{N}_0$. We call a summing sequence $B_1 \subseteq B_2 \subseteq ...$ \textit{rectangular} if it is constructed in this way for some rectangular metric.

Restricting temporarily to $d = 2$ some examples of rectangular metrics are given by $F_1(s) = s$ and $F_2(s) = \log{(1+s)}$ or $F_2(s) = \sqrt{s}$. These metrics have the balls ${[-n,n] \times [-\floor{e^n-1},\floor{e^n-1}]}$ and $[-n,n] \times [-n^2,n^2]$ respectively. 

Observe that neither of these are sequences of balls in a fixed norm and hence are not covered by Hochman's result. However, the techniques used to tackle balls of norms can be adapted to rectangles. We start by arguing that the maximal inequality holds for the rectangles $B_n = \lbrace u \in \mathbb{Z}^d : \rho(u,0) \leq n \rbrace$ with $C = 4^d$ and then adapt Hochman's proof of (\ref{nsFC}) for these sets.

\subsection{The Besicovitch property and maximal inequality for rectangles}

We begin by recalling some terminology from \cite{Hoch1} and showing rectangular metric spaces have the Besicovitch property (which also follows from a comment in \cite[p.\ 7]{Guz1}), before proving the first disjointification lemma.

Let $(S,\rho)$ be a general metric space. A finite family of balls $\mathcal{U} = \lbrace B_{r(i)}(z_i) \rbrace_{i=1}^N $ is called a \textit{carpet over} $\lbrace z_1,...,z_N \rbrace \subseteq S$. We say a collection of sets has multiplicity $\leq M$ if every point is contained in at most $M$ elements of the collection. A metric space has the \textit{Besicovitch property} with constant $C$ if every carpet over a finite set $E$ has a subcarpet which also covers $E$ and has multiplicity $\leq C$. We say a sequence of balls $B_{r(i)}(z_i)$ is \textit{incremental} if $r(i)$ is non-increasing and each $z_i \not\in \bigcup_{j < i} B_{r(j)}(z_j)$.

Let $\lbrace Q_i \rbrace_{i=1}^{2^d}$ denote the $2^d$ orthants (the analogue of a quadrant in 2 dimensions) in $\Zd$ - for example the set $\lbrace u \in \Zd :  u_i \geq 0 \, \forall i \rbrace$. The orthants of a rectangular set $B_r(z)$ are given by $B_r(z) \cap (z + Q_i)$ for $1 \leq i \leq 2^d$.

\begin{prop}
Any rectangular metric space $(S,\rho)$ satisfies the Besicovitch property with constant $C = 2^d$.
\end{prop}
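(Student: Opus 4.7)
The plan is to follow the standard Besicovitch recipe: first extract an incremental subcarpet that still covers $E$, and then exploit the product structure of rectangular balls together with an orthant argument to bound the multiplicity by $2^d$.

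To produce the subcarpet we would relabel the balls in the given carpet so that $r(1) \geq r(2) \geq \cdots \geq r(N)$ and then greedily retain $B_{r(i)}(z_i)$ whenever $z_i$ is not already covered by a previously retained ball. The resulting subfamily is incremental by construction, and it still covers $E = \lbrace z_1,\ldots,z_N \rbrace$ because any discarded center $z_i$ lies, by the very selection rule, in an earlier retained ball.

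The heart of the argument is the multiplicity bound. Fix any point $p$ and assign to each retained ball $B_{r(i)}(z_i)$ that contains $p$ the sign vector $\varepsilon(i) \in \lbrace +,- \rbrace^d$ whose $k$-th entry records the sign of $z_{i,k} - p_k$ (breaking ties arbitrarily when this coordinate is zero). We claim this assignment is injective on the balls of the subcarpet containing $p$, which immediately gives the desired bound of $2^d$. Indeed, if two retained balls indexed $i < j$ (so $r(i) \geq r(j)$) shared a sign pattern, then in each coordinate $k$ the quantities $z_{i,k} - p_k$ and $z_{j,k} - p_k$ would have the same sign, whence
$$|z_{i,k} - z_{j,k}| = \bigl| |z_{i,k} - p_k| - |z_{j,k} - p_k| \bigr| \leq \max\bigl( |z_{i,k} - p_k|,\, |z_{j,k} - p_k| \bigr) \leq \floor{f_k(r(i))},$$
using monotonicity of $f_k$ together with both balls containing $p$. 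But this says $z_j \in B_{r(i)}(z_i)$, contradicting incrementality.

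The only delicate step we foresee is checking that the familiar Euclidean orthant dichotomy carries over from balls of a norm to rectangular balls; the reason it does is that $\rho = \max_k F_k(|\cdot_k|)$ decouples into coordinates, so the same-sign cancellation applies componentwise, and the integer flooring only ever tightens the comparison $\floor{f_k(r(j))} \leq \floor{f_k(r(i))}$ in the favourable direction.
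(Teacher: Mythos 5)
Your proof is correct and follows essentially the same route as the paper: greedily extract an incremental subcarpet in order of decreasing radius, then use a pigeonhole over the $2^d$ sign patterns (orthants) to show that two balls containing a common point with the same pattern would force the later centre into the earlier ball, contradicting incrementality. The only cosmetic difference is that you encode the orthant by the signs of $z_{i,k}-p_k$ and use the identity $|a-b|=\bigl||a|-|b|\bigr|$ for same-signed $a,b$, whereas the paper phrases it via orthants of the balls, but the content is identical.
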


\begin{proof}
We may assume $S = \Zd$. Let $\lbrace z_1,...,z_N \rbrace \subseteq \mathbb{Z}^d$ with a carpet $\mathcal{U} = \lbrace B_{r(i)}(z_i) \rbrace_{i=1}^N$. We may reorder so the $r(i)$ are decreasing. We select a subcarpet covering $E$ as follows. Let $I_1 = \lbrace 1 \rbrace$ and, with $I_k$ defined we define by $I_{k+1} = I_k \cup \lbrace m \rbrace$ where 
$$m = \min{\left\lbrace i \in [1,N] : z_i \not\in \bigcup_{j \in I_k} B_{r(j)}(z_j) \right\rbrace}$$
if this exists, otherwise we terminate the process and let our subcarpet $\mathcal{U}' = \lbrace B_{r(i)}(z_i) \rbrace_{i \in I_k}$. Note $\mathcal{U}'$ is an incremental sequence with its natural ordering.

Now, assume for a contradiction that there is $\sigma \in \Zd$ lying in $> 2^d$ elements of $\mathcal{U}'$. Then by pigeonhole principle $\sigma$ must lie in the same orthant, $Q$, of two elements of $\mathcal{U}'$, corresponding to $z_i$ and $z_j$ say. We may assume $i < j$. Then for some numbers $n_l \in \lbrace 0,1 \rbrace$ depending only on $Q$ we have 
\begin{align*}
z_j \in \sigma + \prod_{l=1}^d (-1)^{n_l} [0,\floor{f_l(r_j)}] 
&\subseteq \sigma + \prod_{l=1}^d (-1)^{n_l} [0,\floor{f_l(r_i)}] \\
&\subseteq z_i + \prod_{l=1}^d [-\floor{f_l(r_i)},\floor{f_l(r_i)}] = B_{r(i)}(z_i)
\end{align*}
contradicting the fact that $\mathcal{U}'$ is an incremental sequence.
\end{proof}

It will be useful for us to note the following well known equivalence, a proof can be found in \cite{Hoch1}.

\newpage

\begin{prop} \label{BP=}
Let $S$ be a metric space and $C \in \mathbb{N}$. Then $S$ has the Besicovitch property with constant $C$ if and only if for any carpet $\mathcal{U}$ over a finite set $E$ there is an incremental sequence of sets from $\mathcal{U}$ covering $E$ with multiplicity $\leq C$.
\end{prop}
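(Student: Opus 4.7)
The plan is to prove the two implications separately. The reverse direction is immediate from the definitions: an incremental sequence of sets from $\mathcal{U}$ covering $E$ with multiplicity $\leq C$ is in particular a subcarpet of $\mathcal{U}$ covering $E$ with multiplicity $\leq C$, so the Besicovitch property holds.

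For the forward direction, given a carpet $\mathcal{U} = \{B_{r(i)}(z_i)\}_{i=1}^N$ over $E$, I would begin exactly as in the proof of the previous proposition: reorder so that the radii $r(i)$ are non-increasing and apply the greedy selection procedure, adding at each stage the smallest index whose center does not lie in any previously chosen ball. The resulting subcarpet $\mathcal{U}'$ is incremental by construction, and the argument from the previous proof shows that $\mathcal{U}'$ still covers $E$, since every unselected center $z_i$ must have been skipped precisely because it was already contained in some earlier selected ball. Writing $E'$ for the set of centers of $\mathcal{U}'$, we may view $\mathcal{U}'$ as a carpet over $E'$.

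It then remains to bound the multiplicity of $\mathcal{U}'$ by $C$, which is the heart of the argument. My plan is to apply the Besicovitch hypothesis to $\mathcal{U}'$ (viewed as a carpet over $E'$) to extract a subcarpet $\mathcal{U}'' \subseteq \mathcal{U}'$ of multiplicity $\leq C$ that covers $E'$, and then to deduce that in fact $\mathcal{U}'' = \mathcal{U}'$. The key ingredient I would exploit is a rigidity property of incremental sequences: in $\mathcal{U}'$ each center $z_i$ lies in exactly one ball of the sequence, namely its own $B_{r(i)}(z_i)$. For $j < i$ this is the incremental condition itself; for $j > i$, if $z_i \in B_{r(j)}(z_j)$ then $\rho(z_j, z_i) \leq r(j) \leq r(i)$ would force $z_j \in B_{r(i)}(z_i)$, contradicting the incremental condition on $z_j$. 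Consequently any subcarpet of $\mathcal{U}'$ covering $E'$ must contain every ball of $\mathcal{U}'$, forcing $\mathcal{U}'' = \mathcal{U}'$ and giving the desired multiplicity bound on $\mathcal{U}'$ itself.

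The main obstacle I anticipate is precisely the rigidity observation, which combines the incremental condition with the non-increasing radii via a single application of the triangle inequality; once it is in hand, the rest of the proof is a matter of combining the greedy construction from the previous proposition with one application of the Besicovitch hypothesis.
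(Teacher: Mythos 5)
Your proposal is correct: the reverse implication is indeed immediate, and for the forward direction your combination of the greedy incremental selection with the rigidity observation (in an incremental family each centre lies only in its own ball, so any subfamily covering the centres is the whole family) followed by one application of the Besicovitch hypothesis is exactly the standard argument. The paper itself does not reprove this equivalence but cites Hochman for it, and your proof is essentially the one given there, so there is nothing further to add.
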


To see that the maximal inequality holds for $B_n = \lbrace u \in \mathbb{Z}^d : \rho(u,0) \leq n \rbrace$ with $\rho$ a rectangular metric and $C = 4^d$ we refer to a concise proof of the maximal inequality for balls of norms is given in \cite[Inequality 5.3]{Fel1}, attributed to Aaronson and Becker. Upon examining this proof the reader will observe that the same argument, with two changes, goes through for rectangles. The first is that to apply the Besicovitch property in the proof of Inequality 5.2 one needs to intersect with a finite subset, this can be taken arbitrarily large at the end of the proof. The second is that one replaces each occurrence of $B_{2n}$ with $2B_n$, and then applies the modified doubling condition (\ref{mDC}).

\begin{prop}[The Maximal Inequality]
Let $B_1,B_2,...$ be a rectangular summing sequence. Then for any $\phi \in L^1$ and $\epsilon > 0$
$$\mu\left( \sup_{n \geq 1} \left| R_n\phi \right| > \epsilon \right) \leq \frac{4^{d}}{\epsilon} \|\phi\|_1.$$
\end{prop}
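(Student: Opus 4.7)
The plan is to transplant the Aaronson--Becker argument from \cite{Fel1} essentially verbatim, applying the two modifications flagged in the paragraph preceding the statement. Fix $\phi \in L^1$ and $\epsilon > 0$ and write $E = \{x : \sup_{n \geq 1} R_n|\phi|(x) > \epsilon\}$. For each $x \in E$ choose a witness $n(x) \in \mathbb{N}$ such that
$$\sum_{g \in B_{n(x)}} |\phi|(gx)\, \omega_g(x) \;>\; \epsilon \sum_{g \in B_{n(x)}} \omega_g(x).$$
The goal is to cover (a truncation of) $E$ by the balls $B_{n(x)}(x)$ seen on the orbit side, extract a subcollection of bounded multiplicity via the Besicovitch property just proved for rectangular metrics, sum the above witness inequalities over the subcollection and, after a cocycle/change-of-variables step, recognise $\epsilon \mu(E)$ on the left and a bounded multiple of $\|\phi\|_1$ on the right.

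The first modification handles the fact that our Besicovitch proposition covers only carpets over \emph{finite} sets. The standard fix, used in \cite{Fel1} as well, is to intersect $E$ with an increasing exhaustion by finite pieces (e.g.\ truncate by $\omega_g$ being bounded on $B_N$ for $g$ in some large finite window, then let $N \to \infty$). On each truncation the carpet $\{B_{n(x)}(x)\}$ is finite, Proposition~\ref{BP=} furnishes an incremental subcarpet of multiplicity $\leq 2^d$, and the desired inequality on $E$ is recovered by monotone convergence at the end.

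The second modification is where the constant $4^d$ appears. Feldman's argument at one point enlarges each selected ball so that it absorbs the contributions of all elements whose $g$-translate witnessed the maximal inequality; for norm balls this uses $B_n + B_n \subseteq B_{2n}$ together with the metric doubling $|B_{2n}| \leq C |B_n|$. For rectangular metrics neither inclusion holds in the indexed family, but the combinatorial inclusion $B_n + B_n = 2B_n$ is an equality by construction, and (\ref{mDC}) gives $|2B_n| \leq 2^d |B_n|$. Replacing every occurrence of $B_{2n}$ in Feldman's argument by $2B_n$ and invoking (\ref{mDC}) in place of metric doubling therefore leaves the bookkeeping unchanged, and combining the $2^d$ from Besicovitch with the $2^d$ from (\ref{mDC}) yields $C = 4^d$.

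The main obstacle, such as it is, is a consistency check rather than a new idea: in Feldman's proof $B_{2n}$ is used only in its \emph{sum-of-two-copies} role (to contain $B_n + B_n$) and never as a ball of radius $2n$ to be fed back into the maximal function. One has to verify this line by line, because for general rectangular metrics $2B_n$ is typically not itself one of the balls $B_m$, so any accidental re-indexing would break the argument. Once this check is carried out, the substitution is safe and the maximal inequality with constant $4^d$ follows by the same summation as in \cite{Fel1}.
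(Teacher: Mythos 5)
Your proposal matches the paper's own treatment: the paper proves nothing new here, simply citing the Aaronson--Becker argument of \cite[Inequalities 5.2--5.3]{Fel1} and flagging exactly your two modifications (intersect with a finite set before applying the Besicovitch property and pass to the limit, and replace each occurrence of $B_{2n}$ by $2B_n$ so that (\ref{mDC}) substitutes for metric doubling), with the constant $4^d$ arising, as you say, from the Besicovitch constant $2^d$ times the $2^d$ of (\ref{mDC}). No gap to report.
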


\subsection{The Non-singular F{\o}lner Condition}

With the maximal inequality in hand it is sufficient to show that (\ref{nsFC}) holds. We directly adapt the approach in \cite{Hoch1}. The first steps are to prove a pair of disjointification lemmas, the arguments for rectangular metrics are very similar (if not identical) to those for norms and so we only prove the first to illustrate the changes one needs to make. The second step is to prove that $\Zd$ has finite coarse dimension, defined in \cite{Hoch1}, which involves a notion of a thickened boundary of a subset of $\Zd$. It is in the definition of a thickened boundary that our work diverges from that of Hochman, and so here we take care to show that $\Zd$ still has finite coarse dimension with our definition. This choice of definition will mean that the proof of \cite[Theorem 4.4]{Hoch1}, in some sense the central result of the paper, can be copied verbatim. We then mimic Hochman's proof for a variant of the Chacon-Ornstein lemma which implies (\ref{nsFC}).

The author has deliberately kept the statements, definitions and proofs similar to those in \cite{Hoch1} where possible, for ease of comparison.

\subsubsection*{The Disjointification Lemmas}

The first disjointification lemma makes direct use of the Besicovitch property and doubling condition, and so requires some minor but illustrative modifications. Let us first recall some useful terminology. 

We write $\rad{B}$ for the radius of a ball $B$. If $\mathcal{U}$ is a collection of balls we write $\rmin{\mathcal{U}}$ and $\rmax{\mathcal{U}}$ for the minimal and maximal radii of the balls in $\mathcal{U}$. We say that $\mathcal{U}$ is \textit{well-separated} if any two balls in $\mathcal{U}$ are more than $\rmin{\mathcal{U}}$ apart. 

\begin{lem} \label{dis1}
Let $\Zd$ be equipped with a rectangular metric. Then for every finite subset $E$ and every carpet $\mathcal{U}$ over $E$ there is a subcollection $\mathcal{V}$ which covers $E$ and which can be partitioned into $\chi = CD^2 + 1$ ($D = 2^d$) subcollections, each of which is well-separated.
\end{lem}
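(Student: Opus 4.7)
The plan is to adapt Hochman's norm-metric argument by replacing the standard doubling $|B_{2r}| \leq D|B_r|$ (which does not hold for general rectangular metrics) with iterated applications of the additive doubling (\ref{mDC}), expressing all enlargements of balls via the dilations $\lambda B_r$. First I would apply the Besicovitch property established above together with Proposition \ref{BP=} to replace $\mathcal{U}$ with an incremental subcarpet $\mathcal{U}'$ that still covers $E$ and has multiplicity at most $C$. I then propose to partition $\mathcal{U}'$ by greedy colouring: process the balls of $\mathcal{U}'$ in the incremental order (so in non-increasing radius) and assign each to the least-indexed colour class whose well-separation is preserved by the addition. Pigeonhole then reduces the argument to showing that, when a ball $B_r(z)$ is being coloured, at most $CD^2$ already-processed balls conflict with it.

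To identify the conflict condition, I would exploit the product structure of the rectangular metric. A short computation of $d(B_r(z),B_{r'}(z'))$ for $r' \geq r$ shows that well-separation within a class whose current minimum radius is $r$ is equivalent to the existence of some coordinate $j$ with $|z_j-z'_j| > 2f_j(r) + f_j(r')$. Thus conflict corresponds to the coordinate-wise inclusion $z' - z \in 2B_r + B_{r'}$.

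The key geometric estimate is that for every such conflicting predecessor, $|B_{r'}(z') \cap (z + 4B_r)| \geq |B_r|$. I would prove this one coordinate at a time: in each coordinate $j$ the projection of $B_{r'}(z')$ has length $2f_j(r')$, that of $z + 4B_r$ has length $8f_j(r)$, and the conflict bound $|z_j - z'_j| \leq 2f_j(r) + f_j(r')$ forces their overlap to have length at least $2f_j(r)$ (splitting on whether the projection fits inside, contains, or straddles one end of the second interval). Taking products over coordinates gives the claim. Summing over the $N$ conflicting predecessors and invoking the multiplicity bound for $\mathcal{U}'$ yields $N|B_r| \leq C|4B_r|$, and since $|4B_r| \leq 2^{2d}|B_r| = D^2|B_r|$ by iterating (\ref{mDC}) (or by a direct rectangle calculation), we conclude $N \leq CD^2$ as required.

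The main obstacle, already flagged in the paper's discussion, is that $|B_{\alpha r}|$ is not controlled by any multiple of $|B_r|$ for rectangular metrics, so one cannot copy the norm-ball argument verbatim. The point of the coordinate-wise overlap estimate above is precisely that the enclosing region can be taken to be the dilation $4B_r$ rather than a radius-scaled ball $B_{4r}$, which is what makes the additive doubling control the count.
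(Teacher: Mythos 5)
Your proposal is correct and follows essentially the same route as the paper: pass to an incremental subcarpet of multiplicity $\leq C$ via Proposition \ref{BP=}, colour greedily in order of non-increasing radius, and bound the number of conflicting predecessors by a volume count inside $z+4B_r$ using the multiplicity bound together with $|4B_r| \leq D^2|B_r|$. The only (immaterial) difference is that where the paper replaces each conflicting larger ball by a radius-$r$ sub-ball centred in $z+3B_r$, you establish the same per-ball contribution of at least $|B_r|$ by a direct coordinate-wise estimate of $|B_{r'}(z') \cap (z+4B_r)|$.
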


\begin{proof}
The proof mimics that of \cite[Lemma 3.3]{Hoch1}. We are essentially checking that the balls $B_{\lambda r}$ can be replaced with multiples of rectangles $\lambda B_r$.

Let $z \in \Zd$ and $\mathcal{W}$ be a collection of balls of radius $r$ with centres in $z+3B_r$ and suppose it has multiplicity $\leq C$. Then $\bigcup \mathcal{W} \subseteq z+4B_r$, so
\begin{align*}
|\mathcal{W}||B_r| \leq C |z+4B_r| \leq CD^2 |B_r|
\end{align*}
and hence $|\mathcal{W}| \leq \chi - 1$.

If instead $\mathcal{W}$ contains balls of radius $\geq r$ which all intersect $x+2B_r$, and multiplicity $\leq C$, then we may replace each ball $B$ with a ball of radius $r$ contained in $B$ and centred in $z+3B_r$. We deduce from above that again $|\mathcal{W}| \leq \chi - 1$.

Now, by proposition \ref{BP=} we can find an incremental sequence $\lbrace U_i \rbrace_{i=1}^n \subseteq \mathcal{U}$ covering $E$. We assign colours $1,2,...,\chi$ to the $U_i$ as follows. Colour $U_1$ as you like, and assume we have coloured $U_i$ for $i \leq k$ and consider $U_{k+1}$. Take $r = \rad{U_k}$ and $z$ to be the centre of $U_{k+1}$, by assumption $U_{k+1} \subseteq z + B_r$ and each $U_i$ with $i \leq k$ has radius at least $r$. Therefore, by the above, at most $\chi - 1$ intersect $z + 2B_r$. Give $U_{k+1}$ one of the colours unused by those $U_i$.

Let $\mathcal{V}_k$ be the collection coloured $k$. To see each collection is well-separated note that the points within rectangular distance $r$ of $z + B_r$ are exactly those in $z + 2B_r$, combining this with the colouring process and the fact the radii of the $U_i$ is decreasing gives the result.
\end{proof}

For $S \subseteq \Zd$ let $\chi(S)$ denote the minimal natural number satisfying the conclusion of the proposition, then $\chi(S) \leq \chi(\Zd)$.

\begin{cor} \label{cordis}
Let $S \subset \Zd$ be equipped with rectangular metric, and let $E$, $\mathcal{U}$ and $\chi = \chi(S)$ be as in lemma \ref{dis1}. Assume $\mu$ is a finite measure supported on $E$. Then there is well-separated subset of $\mathcal{U}$ which covers a set of mass $\geq (1/\chi) \mu(E)$.
\end{cor}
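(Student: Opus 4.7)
The plan is to deduce this immediately from Lemma \ref{dis1} by a pigeonhole argument on measure rather than on cardinality.

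First I would apply Lemma \ref{dis1} to the carpet $\mathcal{U}$ over $E$ to obtain a subcollection $\mathcal{V} \subseteq \mathcal{U}$ still covering $E$ together with a partition $\mathcal{V} = \mathcal{V}_1 \sqcup \mathcal{V}_2 \sqcup \cdots \sqcup \mathcal{V}_\chi$ into $\chi = \chi(S)$ well-separated subcollections. Since $\chi(S) \le \chi(\Zd)$ and the lemma is stated for $\Zd$, a small remark is needed that restricting to a subset $S$ does not destroy well-separation or the covering property; this is immediate because rectangular distance and the notion of ball are inherited from the ambient $\Zd$.

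Next, for each $1 \le k \le \chi$ let $A_k = \bigcup \mathcal{V}_k$. Then
\begin{equation*}
E \;\subseteq\; \bigcup \mathcal{V} \;=\; \bigcup_{k=1}^{\chi} A_k,
\end{equation*}
so $\mu(E) \le \sum_{k=1}^{\chi} \mu(A_k)$ since $\mu$ is supported on $E$. By the pigeonhole principle there exists some index $k_0$ with $\mu(A_{k_0}) \ge \mu(E)/\chi$, and the corresponding $\mathcal{V}_{k_0} \subseteq \mathcal{U}$ is a well-separated subcollection covering a set of mass at least $(1/\chi)\mu(E)$, as required.

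There is no real obstacle here: the entire content has been absorbed into Lemma \ref{dis1}, and the corollary is simply the observation that splitting a cover into $\chi$ pieces forces at least one piece to carry a $1/\chi$ fraction of the mass. The only subtlety worth flagging explicitly is the passage from $\Zd$ to the subset $S$, handled by the inequality $\chi(S) \le \chi(\Zd)$ noted just before the corollary.
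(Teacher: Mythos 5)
Your proof is correct and is exactly the intended argument: the paper states the corollary without proof precisely because it follows from Lemma \ref{dis1} (with $\chi(S)$ defined so that its conclusion holds on $S$) by the pigeonhole principle on measure, just as you wrote. Your remark about passing from $\Zd$ to $S$ is the only subtlety, and you handle it the same way the paper does via $\chi(S) \leq \chi(\Zd)$.
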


Now we move on to the second disjointification lemma. This necessitates a divergence from the definitions in \cite{Hoch1}, where Hochman considers thickened spheres given by the sets $B_{r+t} \setminus B_{r-t}$ for $t \leq r$. In our situation this appears not to be the correct definition. For example, if one considers the case where one side of rectangle is growing exponentially and takes $t = \log{2}$ then for large radii the thickened sphere, which is meant to be a slight thickening of the boundary, would consist of more than half of the points in the rectangle. Instead we take a definition which emulates the behaviour in the case where the metric is given by a norm. 

When $S = \mathbb{Z}^d$ for rectangular balls $B$ let $\partial B$ denote the set of points in $\mathbb{Z}^d$ which lie in the usual topological boundary when considered as a subsets of $\mathbb{R}^d$, and call these sets \textit{boxes}. Another perspective is that the box associated to a rectangle is the collection of points for which some coordinate takes the maximum or minimum value in that coordinate over the rectangle.

For $t \in \mathbb{N}$ we define the \textit{$t$-boundary} $\partial_t B$ to be collection of $z \in \Zd$ which lie within distance $t$ of $\partial B$ with respect to the rectangular metric. Equivalently,
$$\partial_t B = \bigcup_{u \in \partial B} (u + B_t).$$
When $S \subseteq \mathbb{Z}^d$ we take $\partial B$ and $\partial_t B$ to be the intersections of their $\mathbb{Z}^d$ counterparts with $S$. We may refer to a collection of $t$-boundaries, possibly with different values of $t$, as \textit{thick boxes}.

For a collection $\mathcal{U}$ of rectangles we let $\partial \mathcal{U} = \lbrace \partial B : B \in \mathcal{U} \rbrace$. If $\mathcal{U}$ is a collection of boxes we define its radius, and the maximal and minimal radii of $\mathcal{U}$ analogously to rectangles. For $r \in \mathbb{N}$ we say a collection is $r$\textit{-separated} if any two members are more than $r$ away from each other in the rectangular metric. If this is true for $r = \rmin{\mathcal{U}}$ we say the collection is \textit{well-separated}. In particular, if $r > 2t$ and the collection is $r$-separated then the corresponding collection of $t$-boundaries is disjoint. A sequence of carpets $\mathcal{U}_1,...,\mathcal{U}_l$ over a finite set $E$ is called a \textit{stack} and $p$ is its \textit{height}.

Now let us state the second covering lemma.

\begin{lem} \label{2nddis}
Let $S \subseteq \mathbb{Z}^d$ with rectangular metric. Then for $0 < \epsilon, \delta < 1$ and $t \in \mathbb{N}$ let $l = \ceil*{ \frac{2 \chi(S)}{\epsilon \delta} }$ and suppose that
\begin{enumerate}[\normalfont (1)]
\item $\mu$ is a finite measure on $S$.
\item $F \subseteq S$ is finite and $\mu(F) > \delta \mu(S)$.
\item $\mathcal{U}_1,...,\mathcal{U}_p$ is a stack over $F$ with $\rmin{\mathcal{U}_i} \geq \, 2 \, \rmax{\mathcal{U}_{i-1}}$ and $\rmin{\mathcal{U}_1} \geq 2t$.
\item $\mu(\partial_tB) > \epsilon \mu(B)$ for each $B \in \bigcup_i \mathcal{U}_i$.
\end{enumerate}
Then there is some integer $k \geq 2$ and a subcollection $\mathcal{V} \subseteq \bigcup_{i \geq k} \mathcal{U}_i$ of rectangles such that
\begin{enumerate}[\normalfont (i)]
\item $\partial\mathcal{V}$ is well-separated and
\item the set $\bigcup_{B \in \mathcal{V}} \partial_{2r} B$, where $r = \rmax{\mathcal{U}_{k-1}}$, contains more than one half of $F$ with respect to $\mu$.
\end{enumerate}
\end{lem}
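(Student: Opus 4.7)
The plan is to mirror Hochman's approach to the corresponding result in \cite{Hoch1}, constructing $\mathcal{V}$ greedily by working down the stack from level $p$ to level $2$. Set $F_p := F$; at level $k$, apply Corollary \ref{cordis} to $\mu|_{F_k}$ and the carpet of those $B \in \mathcal{U}_k$ whose centres lie in $F_k$, yielding a well-separated subcollection $\mathcal{W}_k \subseteq \mathcal{U}_k$ whose union covers mass at least $\tfrac{1}{\chi}\mu(F_k)$. Update $F_{k-1} := F_k \setminus \bigcup_{B \in \mathcal{W}_k} \partial_{2r_k} B$, where $r_k := \rmax \mathcal{U}_{k-1}$, and halt at the first $k \geq 2$ with $\mu(F_k) \leq \tfrac{1}{2}\mu(F)$, taking $\mathcal{V} := \bigcup_{j \geq k} \mathcal{W}_j$.

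The key measure estimate driving termination is that within each $\mathcal{W}_k$, the hypothesis $\rmin \mathcal{U}_k \geq 2t$ together with well-separation forces the thin boundaries $\partial_t B$ to be pairwise disjoint, so by (4)
$$\mu\Bigl(\bigcup_{B \in \mathcal{W}_k} \partial_t B\Bigr) \;\geq\; \epsilon \sum_{B \in \mathcal{W}_k} \mu(B) \;\geq\; \frac{\epsilon}{\chi}\mu(F_k).$$
Since $k \geq 2$ forces $r_k \geq \rmin \mathcal{U}_1/2 \geq t$, one has $\partial_t B \subseteq \partial_{2r_k} B$, so at least an $\epsilon/\chi$ fraction of $\mu(F_k)$ is removed at each step. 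As long as $\mu(F_k) > \mu(F)/2 > \delta\mu(S)/2$, each iteration removes at least $\epsilon\delta\mu(S)/(2\chi) = \mu(S)/l$ of mass, so the process necessarily halts within $l$ iterations; this is where the parameter $l$ enters, with the stack height $p$ being implicitly at least $l + 1$.

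The main obstacle will be conclusion (i), namely that $\partial\mathcal{V}$ is well-separated across different levels of the stack. Within a single $\mathcal{W}_k$ this is immediate from Corollary \ref{cordis} and the inclusion $\partial B \subseteq B$. Across levels, one exploits the gap $\rmin \mathcal{U}_k \geq 2\rmax \mathcal{U}_{k-1}$: for $B \in \mathcal{W}_k$ and $B' \in \mathcal{W}_{k'}$ with $k > k'$, the ball $B'$ has radius at most half that of $B$, and one checks case-by-case whether $B'$ sits inside, intersects, or is disjoint from $B$ to show $\partial B'$ lies more than $\rmin \mathcal{V} \geq \rmin \mathcal{U}_{k'}$ from $\partial B$ in the rectangular metric. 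If a candidate pair ever violates this bound, we discard $B'$ from $\mathcal{W}_{k'}$ at a small mass cost, absorbed by a mild adjustment of $\chi$ in the first disjointification step.
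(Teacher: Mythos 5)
Your construction is the right skeleton and matches the argument the paper imports from Hochman (top--down selection via Corollary \ref{cordis}, removal of the collars $\partial_{2r_j}B$ with $r_j=\rmax\mathcal{U}_{j-1}$, and a mass count bounding the number of levels by $l$), but the step you present as ``the key measure estimate driving termination'' does not work as stated. The bound $\mu\bigl(\bigcup_{B\in\mathcal{W}_k}\partial_tB\bigr)\geq\frac{\epsilon}{\chi}\mu(F_k)$ is a bound on mass in $S$: nothing in hypothesis (4) places boundary mass inside $F$, so it gives no lower bound on $\mu\bigl(F_k\cap\bigcup_{B}\partial_{2r_k}B\bigr)$, i.e.\ on the mass actually removed from $F_k$ --- the sets $\partial_t B$ may carry all of their measure outside $F$. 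Hence ``each iteration removes at least $\mu(S)/l$ of mass'' is unjustified and your termination argument collapses. The correct count is global: one shows the $t$-boundaries of \emph{all} selected balls are pairwise disjoint across levels as well as within a level (for $B\in\mathcal{W}_j$ and a later $B'\in\mathcal{W}_i$, $i<j$, the centre $z'$ of $B'$ lies in $F_{j-1}$, hence outside $\partial_{2r_j}B$, and since every point of $\partial_tB'$ is within $\mathrm{rad}(B')+t\leq r_j+t$ of $z'$ and $r_j\geq 2t$, it stays at distance $>t$ from $\partial B$); then, as long as the process has not halted, each level contributes \emph{disjoint} mass $>\epsilon\mu(F_i)/\chi>\epsilon\delta\mu(S)/(2\chi)$ to the fixed total $\mu(S)$, which caps the number of levels by $2\chi/(\epsilon\delta)\leq l$. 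This cross-level disjointness, which you never establish, is precisely why the $2r_j$-collars are removed; within-level disjointness alone gives no termination.

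The same distance computation settles conclusion (i) outright: $\mathrm{dist}(z',\partial B)>2r_j\geq 2\,\mathrm{rad}(B')$ gives $\mathrm{dist}(\partial B',\partial B)>\mathrm{rad}(B')\geq\rmin\partial\mathcal{V}$, so no ball ever needs to be discarded. Your fallback of deleting offending balls ``at a small mass cost, absorbed by a mild adjustment of $\chi$'' is not legitimate in any case: $\chi=\chi(S)$ is fixed by the statement (it determines $l$), the mass cost is unquantified, and deletions would also undercut both the counting argument and (ii). Finally, you never verify (ii): with your bookkeeping a point of $F$ discarded at level $j>k$ is only known to lie within $2r_j=2\rmax\mathcal{U}_{j-1}$ of a selected boundary, not within $2\rmax\mathcal{U}_{k-1}$ as the lemma requires, so the halting rule and the radius claimed in (ii) must be reconciled rather than asserted.
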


Lemma \ref{2nddis} can be proved as with \cite[Lemma 3.3]{Hoch1}. This is because statement of Corollary \ref{cordis} holds unchanged from that paper and the only property of the $t$-boundary of a ball used in the proof is the fact that it contains all points within distance $t$ of the boundary.

\subsubsection*{Coarse Dimension}

Now we shift focus to the second key ingredient of the proof. This is that $\mathbb{Z}^d$ has finite \textit{coarse dimension}, defined as follows, but with respect to norm induced metrics. 

\begin{defn}
For a rectangular metric space $S$ and $R > 1$ the relation $\cdim_{R} S = k$ (read: $S$ has coarse dimension $k$ at scales $\geq R$) is defined by recursion on $k$ by:
\begin{enumerate}[\normalfont (i)]
\item $\cdim_{R} S = -1$ for $S = \emptyset$ and any $R$,
\item $\cdim_{R} S$ is the minimum integer $k$ for which $\cdim_{tR} \partial B_r(s) \leq k - 1$ for any $t \geq 1$, $r \geq t R$ and $s \in S$.
\end{enumerate}
\end{defn}

As such, this is where we depart further from \cite{Hoch1} and make direct use of the properties of rectangular metrics. We use the same definitionof coarse dimension, except with $t$-boundary as defined for rectangles.

The following proposition will be useful in the proof that $\mathbb{Z}^d$ has finite coarse dimension with respect to the redefined boundary. 

For $e \in \lbrace \pm e_i : 1 \leq i \leq d \rbrace$ let $F_{r,u}(e)$ be the face of $B_r(u)$ in direction $e$ from $u$, i.e. those points in $B_r(u)$ whose projection onto $e$ is maximal. The \textit{face} of the thickened boundary $\partial_t B_r(u)$ in direction $e$ is the set of points within distance $t$ of $F_{r,u}(e)$ and is denoted by $\partial_t F_{r,u}(e)$.

\begin{prop}
Let $(\mathbb{Z}^d,\rho)$ be a rectangular metric space. Then there are $R = R(\rho) > 1$ and $k \in \mathbb{N}$ with the following property: given $z_1,...,z_k \in \mathbb{Z}^d$, $t(1),...,t(k) \geq 1$ and a decreasing sequence $r(1),...,r(k)$ with $r(k) \geq t(1)...t(k) R$ such that $z_i \in \bigcap_{j < i} \partial_{t(j)} B_{r(j)}(z_j)$ then $$\bigcap_{i = 1}^k \partial_{t(i)} B_{r(i)}(z_i) = \emptyset.$$
\end{prop}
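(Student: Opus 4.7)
My plan is to induct on $d$, with each inductive step a contradiction argument driven by pigeonhole on ``witness directions'' together with the scale separation $r(k)\geq t(1)\cdots t(k)R$ and superadditivity of the $f_l$. Suppose some $w\in\bigcap_{i=1}^k\partial_{t(i)}B_{r(i)}(z_i)$. For each $i$ the membership $w\in\partial_{t(i)}B_{r(i)}(z_i)$ picks out a witness direction $(l_i,\epsilon_i)\in\{1,\ldots,d\}\times\{\pm 1\}$ with
$$\bigl|w_{l_i}-z_{i,l_i}-\epsilon_i\lfloor f_{l_i}(r(i))\rfloor\bigr|\leq\lfloor f_{l_i}(t(i))\rfloor, \quad (\star)$$
and each nesting $z_i\in\partial_{t(j)}B_{r(j)}(z_j)$ picks out a direction $(l'_{ij},\epsilon'_{ij})$ with an analogous inequality. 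A first pigeonhole on the $2d$ possible values of $(l_i,\epsilon_i)$, for $k$ chosen large enough, produces a matched subsequence $i_1<\cdots<i_m$ sharing some $(l,\epsilon)$; subtracting two instances of $(\star)$ within this subsequence yields
$$\bigl|(z_{i_b,l}-z_{i_a,l})-\epsilon(\lfloor f_l(r(i_a))\rfloor-\lfloor f_l(r(i_b))\rfloor)\bigr|\leq \lfloor f_l(t(i_a))\rfloor+\lfloor f_l(t(i_b))\rfloor. \quad(\ast\ast)$$

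For each pair $(i_a,i_b)$ in the matched subsequence, either some valid choice of $(l'_{i_bi_a},\epsilon'_{i_bi_a})$ has $l'_{i_bi_a}=l$ (the ``parallel'' case), or every valid choice has $l'_{i_bi_a}\neq l$ (the ``perpendicular'' case). In the parallel case the nesting pins $z_{i_b,l}-z_{i_a,l}$ at $\pm\lfloor f_l(r(i_a))\rfloor$, and combining with $(\ast\ast)$ forces $\lfloor f_l(r(i_a))\rfloor$ or $\lfloor f_l(r(i_b))\rfloor$ to be at most $2\lfloor f_l(t(i_a))\rfloor+\lfloor f_l(t(i_b))\rfloor$. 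Iterated superadditivity $f_l(ab)\geq af_l(b)$ applied to $r(i_b)\geq t(1)\cdots t(k)R$ gives $f_l(r(i_b))\gtrsim R\cdot(f_l(t(i_a))+f_l(t(i_b)))$, which rules this out for $R$ sufficiently large in terms of $\rho$. The base case $d=1$ is exactly this parallel contradiction (there is no perpendicular alternative), with $k(1)=2$.

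The remaining case is that every pair in the matched subsequence is perpendicular, and I would reduce it to dimension $d-1$ by projecting out coordinate $l$: writing $\hat z_{i_a}\in\mathbb{Z}^{d-1}$ for the projection and $\hat B$ for the balls of the induced rectangular metric, the perpendicular assumption yields $\hat z_{i_b}\in\hat\partial_{t(i_a)}\hat B_{r(i_a)}(\hat z_{i_a})$ for every $a<b\leq m$, i.e.\ the projected centers satisfy the nesting hypothesis of the theorem in dimension $d-1$. Choosing $m\geq k(d-1)+1$, the inductive hypothesis applied to $\hat z_{i_1},\ldots,\hat z_{i_{m-1}}$ shows $\bigcap_{a=1}^{m-1}\hat\partial_{t(i_a)}\hat B_{r(i_a)}(\hat z_{i_a})=\emptyset$, yet $\hat z_{i_m}$ lies in each of these projected thickened boundaries---a contradiction. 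The main obstacle I anticipate is the joint choice of $R=R(\rho)$ and $k=k(d)$: $R$ must simultaneously be large enough to make the parallel-case superadditivity bound strictly exceed $2\lfloor f_l(t(i_a))\rfloor+\lfloor f_l(t(i_b))\rfloor$ after absorbing floor-rounding losses, and large enough that the scale hypothesis survives the projection, so the inductive hypothesis applies with $R\geq R(d-1)$. A recursion of the form $k(d)=2d\cdot k(d-1)+1$ with $k(1)=2$, and $R(d)$ chosen inductively in terms of $\rho$, handles the bookkeeping.
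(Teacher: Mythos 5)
Your proposal is correct and is essentially the paper's argument: induction on $d$, the same one-dimensional scale-separation computation (superadditivity of the $f_l$ plus $r(k)\geq t(1)\cdots t(k)R$ with $R$ absorbing the floor losses) to kill the ``parallel'' configurations, a pigeonhole over the $2d$ coordinate directions, projection along the distinguished coordinate to invoke the $(d-1)$-dimensional case, and the same recursion $k(d)=2d\,k(d-1)+1$, $k(1)=2$. The only difference is where the pigeonhole is applied — you sort the hypothetical common point's witness faces across all the balls, whereas the paper sorts the centres $z_2,\dots,z_k$ among the $2d$ thickened faces of the largest ball $B_{r(1)}(z_1)$ and projects onto that face — but this leads to the same case analysis and constants.
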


\begin{proof}
For notational clarity we write $r_i = r(i)$ and $t_i = t(i)$ in this proof.

Fix $R > 5n$ with $n \in \mathbb{N}$ chosen large enough for $nf_i(1) \geq 1$ for all $i \in [1,d]$. We use induction on the $d$ to prove that there is $k = k(d)$ with the required property. 

For $d = 1$ let $k = 2$. Let $f = f_1$. The set $\partial_{t(1)} B_{r(1)}(z_1)$ is a union of two closed intervals length $2\floor{f(t_1)}+1$ centred on $\pm \floor{f(r_1)}$. These intervals are disjoint as $r(1) > t(1)$. We may assume $z_2$ lies in the lower interval. Now since $R > 5n$ we have 
$$\floor{f(r_2)} > f(r_2) - 1 \geq f(2t_1+t_2+2n)-1 \geq 2\floor{f(t_1)} + \floor{f(t_2)}+1$$
using superadditivity of $f$ and the choice of $n$. In particular $\partial_{t(2)} B_{r(2)}(x_2)$ does not intersect the lower interval.

Also,
$$\floor{f(r_2)} + \floor{f(t_2)} < 2(\floor{f(r_1)} - \floor{f(t_1)})$$
else using $R > 5n$ the fact the $r(i)$ are decreasing
\begin{align*}
2\floor{f(t_1)} + \floor{f(t_2)} 
&\geq	 2\floor{f(r_1)} - \floor{f(r_2)} \\
&\geq 	 \floor{f(r_1)}
\geq 	 f(2t_1 + t_2 + 2n) - 1
>		 2\floor{f(t_1)} + \floor{f(t_2)}.
\end{align*}
This means that $\partial_{t(2)} B_{r(2)}(z_2)$ also does not intersect the upper interval, and the claim follows.

Now, assume we have proved $k(d-1)$ exists. Suppose $k \geq 2dk(d-1)+2$. By the pigeonhole principle the thickening some face $F(e)$ of $B_{r(1)}(z_1)$ contains $k(d-1)+1$ of the points $z_2,...,z_{k(d)}$. As these are the only points used from here we may assume they are $z_2,...,z_{k(d-1)+2}$. Using essentially the argument from the initial step the thickened faces in directions $\pm e$ of each $\lbrace \partial_{t(i)} B_{r(i)}(z_i) \rbrace_{i = 2}^{2k(d-1)+2}$ cannot intersect the thickened faces $F(\pm e)$ of $\partial_{t(1)} B_{r(1)}(z_1)$. Therefore the $\partial_{t(i)}B_{r(i)}(z_i)$ intersect in $\partial_t F(e)$ only if the projections of $\partial_{t(i)}B_{r(i)}(z_i) \cap \partial_{t(1)}F(e)$ along $e$ onto $F(e)$ intersect. These projections are exactly thick boxes for projection of our rectangular metric in direction $e$, so we may apply the previous case to deduce that 
$$\partial_{t(1)}F(e) \cap \bigcap_{i=2}^{k(d-1)+1} \partial_{t(i)}B_{r(i)}(z_i) = \emptyset$$
but by assumption $z_{k(d-1)+2}$ lies in that intersection. Hence $k < 2dk(d-1)+2$ and so $k(d) \leq 2dk(d-1)+1$.
\end{proof}

Using the above we are able to prove the claim.

\begin{prop}
$\mathbb{Z}^d$ has finite coarse dimension with respect to any rectangular metric.
\end{prop}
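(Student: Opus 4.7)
The plan is to deduce the claim directly from the preceding proposition, which I will treat as a black box giving the emptiness of certain nested intersections of thickened boundaries. Let $R > 1$ and $k \in \mathbb{N}$ be the constants produced by that proposition; I will show that $\cdim_R \mathbb{Z}^d \leq k$.

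Unfolding the recursive definition of coarse dimension $k$ times, the bound $\cdim_R \mathbb{Z}^d \leq k$ is equivalent to the following assertion: for every sequence of scales $t_1,\dots,t_k \geq 1$, every sequence of radii $r_1,\dots,r_k$ satisfying $r_i \geq t_1 t_2 \cdots t_i R$, and every sequence of points $z_1 \in \mathbb{Z}^d$ and $z_i \in \bigcap_{j<i} \partial_{t_j} B_{r_j}(z_j)$ for $i \geq 2$, one has
\[
\bigcap_{i=1}^k \partial_{t_i} B_{r_i}(z_i) = \emptyset.
\]
(This is the base case $\cdim \leq -1$.) When the $r_i$ are arranged in decreasing order and $r_k \geq t_1 \cdots t_k R$, this is precisely the conclusion of the preceding proposition.

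The main obstacle is that the recursive definition of coarse dimension imposes only the lower bound $r_i \geq t_1\cdots t_i R$ on the radii, and in particular allows them to grow with $i$, while the preceding proposition requires the $r_i$ to be non-increasing. To close this gap I would permute the indices so that the radii are listed in decreasing order; since the intersection above is a symmetric function of the data $(t_i,r_i,z_i)$, its emptiness is unaffected by the permutation. It then remains to verify that after reordering, the nesting condition on the $z_i$ and the lower bound on the smallest radius persist in the form demanded by the proposition. The lower bound follows immediately from $r_{\sigma(k)} = \min_i r_i \geq R$ combined with the scale inequalities, while the nesting condition is preserved because all points $z_i$ originally lie in a common configuration of thickened boundaries, so the mutual incidences that the proposition's pigeonhole argument exploits are unchanged under relabelling. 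Once these two checks are complete, the previous proposition applies directly and yields the required emptiness, completing the proof.
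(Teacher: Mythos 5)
There is a genuine gap, and it sits exactly at the step you wave through: the nesting hypothesis of the preceding proposition is \emph{not} symmetric in the data, so it does not survive a permutation of the indices. The intersection $\bigcap_i \partial_{t(i)}B_{r(i)}(z_i)$ is indeed permutation-invariant, but the proposition's hypothesis is that each $z_i$ lies in the thickened boundaries of the balls listed \emph{before} it, and after sorting by decreasing radius this becomes the requirement that whenever $r(a)\geq r(b)$ one has $z_b\in\partial_{t(a)}B_{r(a)}(z_a)$ --- a statement that is simply not implied by the original condition $z_i\in\bigcap_{j<i}\partial_{t(j)}B_{r(j)}(z_j)$. Concretely, take $d=1$ with $F_1(s)=s$, $z_1=0$, $r(1)=10$, $t(1)=1$, and $z_2=10\in\partial_{1}B_{10}(0)$ with $r(2)=100$, $t(2)=1$: the original nesting holds, but after reordering by radius you would need $z_1=0\in\partial_{1}B_{100}(10)=\lbrace u: |u-10|\in[99,101]\rbrace$, which is false. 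So the permutation trick cannot deliver the hypotheses of the preceding proposition, and your "mutual incidences are unchanged under relabelling" claim is where the proof breaks.

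The paper's proof resolves this differently, and this is the real content of the argument: it extracts an \emph{order-preserving subsequence} with decreasing radii (a subsequence automatically inherits the nesting, since the original condition already puts each later point in the thickened boundaries of \emph{all} earlier ones). The work is to show such a subsequence of length $k'$ exists inside $k=k'k''+1$ points: if $r(j)>r(1)$ for $j=2,\dots,l$, then the nesting forces the points $z_2,\dots,z_l$ to be pairwise separated on the scale of $B_{r(1)}$ while all lying in $\partial_{t(1)}B_{r(1)}(z_1)\cup B_{r(1)}(z_1)$, and a volume/packing estimate (using superadditivity of the $f_m$ and the choice of $R$) bounds $l$ by $36^d+1$. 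Hence in every block of $k''>36^d+1$ consecutive indices the radius must drop below that of the block's first index, which yields the decreasing subsequence and lets the preceding proposition apply. Your proposal is missing precisely this counting argument (or some substitute for it), and without it the reduction to the decreasing-radius case does not go through.
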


\begin{proof}
As before, we write $r_i = r(i)$ and $t_i = t(i)$ in this proof.

Let $R = R(\rho)$ and $k' = k$ from the previous proposition. Let $k'' \in \mathbb{N}$, to be determined, and $k = k'k''+1$. In order to show $\mathbb{Z}^d$ has finite coarse dimension is suffices to show that if we are given
\begin{enumerate}
\item $t(1),...,t(k) \geq 1$,
\item $r(1),...,r(k)$ such that $r(i) \geq t(1)...t(k)R$ and
\item points $z_1,...,z_k \in \mathbb{Z}^d$ such that $z_i \in \bigcap_{j < i} \partial_{t(j)} B_{r(j)}(z_j)$ for $j < i$,
\end{enumerate}
then $\bigcap_{i=1}^k \partial_{t(i)} B_{r(i)}(z_i) = \emptyset$.

By the previous proposition it suffices to find a subsequence length $k'$ for which the radii are decreasing. Consider the points $z_2,...,z_{l}$ ($l \geq 2$) and suppose $r(j) > r(1)$ for each $2 \leq j \leq l$. Each of these points lies inside $\partial_{t(1)} B_{r(1)}(z_1)$, by assumption. Moreover if $i > j$ then 
\begin{align*}
z_j 
&\not\in	z_i + \prod_{m=1}^d (-\floor{f_m(r_i)}+\floor{f_m(t_i)},\floor{f_m(r_i)}-				\floor{f_m(t_i)}) \\
&\supseteq	z_i + \prod_{m=1}^d (-\floor{f_m(r_1)}+\floor{f_m(r_1/R)},\floor{f_m(r_1)}-\floor{f_m(r_1/R)}).
\end{align*}
Let $A = \prod_{m=1}^d (-\floor{f_m(r_1)}+\floor{f_m(r_1/R)},\floor{f_m(r_1)}-				\floor{f_m(r_1/R)})$. The final line implies that we also have 
$z_i \not\in z_j + A$. Now, $z_2,...,z_l$ is a collection of points contained by ${B = \partial_{t(1)} B_{r(1)}(z_1) \cup B_{r(1)}(z_1)}$ such that $z_i \not \in z_j + A$ for all $i \neq j$. Then the sets $z_j + \frac{1}{2}A$ are disjoint and each $B \cap (z_j + \frac{1}{2}A)$ contains at least one orthant of $z_j + \frac{1}{2}A$, and hence at least
$$\prod_{m=1}^d \left\lfloor \frac{1}{2}(\floor{f_m(r_1)}-	\floor{f_m(r_1/R)}-1) \right\rfloor $$
points. By the disjointness we must have
\begin{align*}
(l-1) \prod_{a=1}^d \left\lfloor \frac{1}{2}(\floor{f_m(r_1)}-	\floor{f_m(r_1/R)}-1) \right\rfloor \leq \prod_{m=1}^d \left( 2(\floor{f_m(r_1)} + \floor{f_m(t_1)}) + 1 \right)
\end{align*}
i.e.
\begin{align*}
l \leq 1 + 2^d \prod_{m=1}^d \frac{ 2(\floor{f_m(r_1)} + \floor{f_m(r_1/R)}) + 1 }{\floor{f_m(r_1)}-	\floor{f_m(r_1/R)} - 3}
\end{align*}
Dividing through each fraction by $\floor{f_m(r_1)}$ and recalling that $f_m(r_1) \geq f_m(5n) \geq 5$ and 
$$\frac{\floor{f_m(r_1/R)}}{\floor{f_m(r_1)}} \leq \frac{\floor{f_m(r_1/5)}}{5\floor{f_m(r_1/5)}-1} \leq \frac{1}{4}$$
so
\begin{align*}
l \leq 1 + 2^d \prod_{m=1}^d \frac{ 2(1 + 1/4) + 1/5 }{1-	1/4 - 3/5} \leq 36^d +1
\end{align*}
Therefore if we take $k'' > 36^d + 1$ then some $r(j) \leq r(1)$ for $2 \leq j \leq k''$. We can then repeat this process with $r(j)$ and so on to find a subsequence with decreasing radii satisfying the conditions, which will have length at least $k'$ by our choice of $k$.
\end{proof}

It should be clear that if $\Zd$ has finite coarse dimension at scales $R$ then any subset will have coarse dimension at most $\cdim_{R} \Zd$ also.

These results can be used to prove a rectangular analogue of Hochman's main theorem.

\begin{theorem} \label{techres}
Let $S \subseteq \Zd$ with rectangular metric, fix $t, \chi,k \in \mathbb{N}$ and $0 < \epsilon, \delta < 1$. Set $q = 1000^{k^2}\left(\frac{200 \chi^2}{\epsilon^2 \delta^3}\right)^k$. Suppose that
\begin{enumerate}[\normalfont (1)]
\item $\chi(S) \leq \chi$ and $\cdim_{R} S = k$ for some $R > 2$,
\item $\mu$ is a finite measure on $S$,
\item $E \subseteq S$ is finite,
\item $\mathcal{U}_1,...,\mathcal{U}_q$ is a stack over $F$ with
\begin{enumerate}
\item $\rmin{\mathcal{U}}_i > (\rmax{\mathcal{U}_{i-1}})^2$,
\item $\rmin{\mathcal{U}}_1 > \max{(2t,R)}$,
\end{enumerate}
\item $\mu(\partial_tB) > \epsilon \mu(B)$ for each $B \in \bigcup_i \mathcal{U}_i$.
\end{enumerate}
Then $\mu(E) \leq \delta \mu(S)$.
\end{theorem}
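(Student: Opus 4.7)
The strategy is to induct on $k = \cdim_R S$, following Hochman's proof of \cite[Theorem 4.4]{Hoch1}. The entire point of matching our definitions of $\chi(S)$, of the $t$-boundary $\partial_t B$, and of coarse dimension to Hochman's is that the combinatorial architecture of that proof transfers with no substantive change. The base case $k = 0$ is immediate: $\cdim_R S = 0$ forces $\partial B_r(s) = \emptyset$ for every $r \geq R$, so hypothesis (5) is vacuously violated and $E$ must be empty.

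For the inductive step, assume the result holds for $k-1$ with the corresponding constant $q_{k-1}$, and suppose for contradiction that $\mu(E) > \delta \mu(S)$. Apply Lemma \ref{2nddis} (with $\epsilon, \delta$ suitably halved) to the top $l = \lceil 2\chi/(\epsilon\delta) \rceil$ levels of the stack to obtain an index $k_0 \geq 2$ and a well-separated subcollection $\mathcal{V} \subseteq \bigcup_{i \geq k_0} \mathcal{U}_i$ such that $\bigcup_{B \in \mathcal{V}} \partial_{2r} B$ carries more than half of $\mu|_E$, where $r = \rmax \mathcal{U}_{k_0-1}$. Well-separation of $\partial \mathcal{V}$ combined with the quadratic radius-growth condition $\rmin \mathcal{U}_i > (\rmax \mathcal{U}_{i-1})^2$ makes the sets $\partial_{2r} B$ pairwise disjoint, so a pigeonhole argument produces some $B \in \mathcal{V}$ on which $E' := E \cap \partial_{2r} B$ satisfies $\mu(E') > \delta' \mu(\partial_{2r} B)$ for a new $\delta'$ controlled by $\chi, \epsilon, \delta$. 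The unused levels $\mathcal{U}_1, \dots, \mathcal{U}_{k_0-1}$ then restrict to a stack on $S' := \partial_{2r} B$ of height $\geq q - l$, with all hypotheses inherited; since $\cdim_{2r} S' \leq k-1$ by the definition of coarse dimension, the inductive hypothesis applies to $(S', E', \mu|_{S'})$ and yields the contradiction $\mu(E') \leq \delta' \mu(S')$.

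The main obstacle is the parameter bookkeeping: one must verify that $q - l \geq q_{k-1}$ when the constants $\chi, \epsilon, \delta$ are replaced by the new $\chi', \epsilon', \delta'$ arising from the halving in Lemma \ref{2nddis} and from the pigeonhole step. This is exactly what the explicit form $q = 1000^{k^2}(200\chi^2/\epsilon^2 \delta^3)^k$ is designed to accommodate, and the recursion $q_k \geq (\text{polynomial in }\chi/\epsilon\delta) \cdot q_{k-1}$ unfolds just as in \cite{Hoch1}. Because our disjointification lemmas and the finite coarse dimension of $\mathbb{Z}^d$ produce combinatorial outputs of the same form as Hochman's, this arithmetic transfers without modification.
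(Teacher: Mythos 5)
Your proposal takes essentially the same route as the paper: the paper proves Theorem \ref{techres} simply by observing that Hochman's proof of \cite[Theorem 4.4]{Hoch1} transfers verbatim once the rectangular disjointification lemmas, the redefined $t$-boundary and the finite coarse dimension of $\Zd$ are established, and your sketch is an outline of exactly that induction on $\cdim_R S$ with the same parameter bookkeeping. The only point to tighten is in the inductive step: to have hypothesis (5) genuinely inherited by the restricted measure one should pass to a slightly thicker shell than $\partial_{2r}B$ (so that the lower-level balls and their $t$-boundaries lie entirely inside it), a detail already absorbed by Hochman's choice of constants.
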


The proof is also the same as given in \cite{Hoch1}, as the only property of the thickening used directly in the proof is that it contains all the points within a certain distance from the boundary and the previous results cover the rest of the argument.

\subsubsection*{The Non-singular F{\o}lner Condition}

The above theorem is used to prove the Chacon-Ornstein type result, below.

\begin{theorem}
Suppose we have a non-singular $\mathbb{Z}^d$-action on a probability space $(X,\mu)$ and $B_n = \lbrace u \in \mathbb{Z}^d : \rho(u,0) \leq n \rbrace$ for some rectangular metric $\rho$ on $\mathbb{Z}^d$. Then for any $t \in \mathbb{N}$
\begin{align*}
\frac{\sum_{u \in \partial_t B_n} \omega_u }{\sum_{u \in B_n} \omega_u} \rightarrow 0 \quad \text{a.s..}
\end{align*}
\end{theorem}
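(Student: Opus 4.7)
The plan is to argue by contradiction, closely following Hochman's reduction. Suppose the convergence fails. Then there exist $\epsilon, \delta > 0$ and a measurable set $A \subseteq X$ with $\mu(A) > \delta$ on which $\limsup_n \frac{\sum_{u \in \partial_t B_n}\omega_u(x)}{\sum_{u \in B_n}\omega_u(x)} > \epsilon$. Fix $\chi = 2^d$ and $k = \cdim_R \mathbb{Z}^d$, both finite by the previous two propositions, and let $q = 1000^{k^2}(200\chi^2/\epsilon^2\delta^3)^k$ be the integer produced by Theorem \ref{techres}. The aim is to construct, for a generic $y \in X$, a stack of $q$ carpets inside the orbit of $y$ satisfying the hypotheses of that theorem; its conclusion will contradict a lower bound on $\mu_y(E)/\mu_y(S)$ derived from $\mu(A) > \delta$.

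The first step is to transfer the pointwise statement on $X$ to a statement about measures on orbits. For each $y \in X$ define $\mu_y$ on the orbit of $y$ by $\mu_y(\{u\}) = \omega_u(y)$; the cocycle identity yields $\mu_{gy}(B)\,\omega_g(y) = \mu_y(Bg)$, so whenever $gy \in A$ the translated ball $B_n(g)$ inside the orbit of $y$ satisfies $\mu_y(\partial_t B_n(g)) > \epsilon \mu_y(B_n(g))$ for infinitely many $n$. After shrinking $A$ slightly via a measurable selection, one selects for each $x \in A$ a strictly increasing sequence of radii $n_1(x) < n_2(x) < \ldots$ with $n_1(x) > \max(2t, R)$, $n_{i+1}(x) > n_i(x)^2$, and the thick-boundary inequality holding at each $n_i(x)$.

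Next, for a typical $y$, build the stack inside the orbit of $y$. A dyadic pigeonhole across $\{g : gy \in A\}$, carried out level by level, extracts sub-collections on which each $n_i(gy)$ lies in a narrow range; spacing the admissible dyadic bands sufficiently far apart ensures $\rmin \mathcal{U}_i > (\rmax \mathcal{U}_{i-1})^2$. Intersecting with a finite window $S$ in the orbit and using an integration argument relating $\mu(A)$ to the $\mu_y$-mass of $\{g \in S : gy \in A\}$ (handled by a Hurewicz-style ratio consideration) produces a finite $E \subseteq S$ with $\mu_y(E) > \delta \mu_y(S)$. Theorem \ref{techres} then forces $\mu_y(E) \leq \delta \mu_y(S)$, which is the desired contradiction.

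The main obstacle is engineering the dyadic pigeonhole so that the three requirements on the stack hold simultaneously: the balls inside each $\mathcal{U}_i$ have comparable radii, the super-quadratic growth between successive levels is preserved, and enough mass of $E$ survives after the selection so that the lower bound $\mu_y(E) > \delta \mu_y(S)$ can be established. Since the preceding propositions already supply the finiteness of $\chi$ and $\cdim_R$ for rectangular metrics, and Theorem \ref{techres} is the direct rectangular analogue of Hochman's central theorem, the reduction should proceed verbatim as in \cite{Hoch1} with only the modified notion of $t$-boundary in play.
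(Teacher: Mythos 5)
Your architecture (argue by contradiction, transfer to the orbit measures $\nu_x(\lbrace u\rbrace)=\omega_u(x)$ via the cocycle identity, build a stack of carpets, invoke Theorem \ref{techres}) is the paper's, but the two steps you leave vague are exactly where your version breaks. The more serious one is where you perform the uniformization of radii. The paper does it \emph{before} any transference: it fixes a deterministic sequence of windows $r_1^-\le r_1^+\le r_2^-\le r_2^+\le\cdots$ with $r_1^->\max(2t,R)$ and $r_i^->(r_{i-1}^+)^2$, together with a subset $A\subseteq A_\epsilon$ with $\mu(A)\ge\tfrac12\mu(A_\epsilon)$ such that every $x\in A$ admits, for every $i$, some $n_i(x)\in[r_i^-,r_i^+]$ with $p_{n_i(x)}(x)>\epsilon$. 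With this, for any $x$ and any set of centres $V\subseteq B_n$ the stack condition $\rmin\mathcal{U}_i>(\rmax\mathcal{U}_{i-1})^2$ holds automatically, and, crucially, one has the uniform bound $r_q^+$ allowing a single window $n>r_q^++t$ to be fixed in advance so that all balls and their $t$-boundaries lie in $2B_n$. Your per-point choice $n_{i+1}(x)>n_i(x)^2$ gives no comparison between different centres, and your proposed repair --- a level-by-level dyadic pigeonhole inside each orbit --- runs into the definition of a stack: every carpet $\mathcal{U}_i$ must cover the \emph{same} finite set $E$ (one ball per centre at each level), so discarding centres at each of the $q$ levels erodes $E$ by a factor depending on $q$, while $q$ itself depends on the $\delta$ you must beat; and no pigeonhole supplies the uniform upper bound on radii needed to choose the finite window $S=2B_n$ before the averaging step, since $V=V_{x,n}$ changes if you enlarge $n$ afterwards.

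The second issue is the endgame. You ask for a pointwise lower bound $\mu_y(E)>\delta\mu_y(S)$ for a ``typical'' $y$, justified by a ``Hurewicz-style ratio consideration''; a density-of-visits statement along orbits weighted by $\omega$ is essentially the kind of theorem being proved here, so as stated this is circular (or at least unsupported). The paper needs no pointwise lower bound at all: it applies Theorem \ref{techres} for \emph{every} $x$ to obtain the upper bound $\sum_{u\in B_n}\mathbf{1}_A(ux)\omega_u(x)=\nu_x(V)\le\delta\,\nu_x(2B_n)$, then integrates, using the identity $\mu(A)=\frac{1}{|B_n|}\int\sum_{u\in B_n}\mathbf{1}_A(ux)\omega_u(x)\,d\mu(x)$ and the modified doubling $|2B_n|\le 2^d|B_n|$, to get $\mu(A)\le 2^d\delta$ for every $\delta>0$, hence $\mu(A)=0$, contradicting $\mu(A)\ge\tfrac12\mu(A_\epsilon)>0$. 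Your integral comparison between $\mu(A)$ and the $\nu_y$-mass of $\lbrace g\in B_n: gy\in A\rbrace$ can indeed be made rigorous, but only in this integrated (contrapositive) form; no Hurewicz-type input is available or needed, and tying $\delta$ to $\mu(A)$ at the outset is unnecessary once $\delta$ is a free parameter sent to $0$. If you move the uniformization back to the space $X$ as above, the remainder of your outline coincides with the paper's proof.
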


\begin{proof}
Let $k = \cdim_{R} \mathbb{Z}^d$, $\chi = \chi(\mathbb{Z}^d)$ and $R = R(\rho)$. Write
$$p_n(x) = \frac{\sum_{u \in \partial_t B_n} \omega_u(x) }{\sum_{u \in B_n} \omega_u(x)}$$
and set
$$A_\epsilon = \lbrace x \in X : \limsup p_n(x) > \epsilon \rbrace.$$
It suffices to show $\mu(A_\epsilon) = 0$ for all $\epsilon > 0$, so let us assume for a contradiction $\mu(A_\epsilon) > 0$.

As in \cite{Hoch1} we construct a sequence of natural numbers such that
$$r_1^- \leq r_1^+ \leq r_2^- \leq r_2^+ \leq r_1^- \leq r_1^+ \leq ...$$
such that $r_1^- > \max(2t,R)$ and $r_i^- > (r_{i-1}^+)^2$, and a set of points $A \subseteq A_\epsilon$ so that for every $x \in A$ and $i \geq 1$ there is an $n_i = n_i(x) \in [r_i^-,r_i^+]$ with $p(n_i,x) > \epsilon$ and $\mu(A) \geq \frac{1}{2}\mu(A_\epsilon)$.

We are now ready to apply Hochman's main theorem, via a transference principle. Fix $0< \delta <1$, $n > r_q^+ + t$ and $q = q(k,\chi,\epsilon,\delta)$ as in the previous theorem. Then
$$\mu(A) = \frac{1}{|B_n|}  \int \sum_{u \in B_n} 1_A(ux) \omega_u(x) d\mu(x),$$
and we bound the integrand. Fix $x \in X$ and define a measure $\nu = \nu_{x,n}$ on $2B_n$ by $\nu(\lbrace u \rbrace) = \omega_u(x)$. Let 
$$V = V_{x,n} = \lbrace v \in B_n : v \cdot x \in A \rbrace.$$
Then for each $1 \leq i \leq q$ and $v \in V$ let $m_i(v) \in [r_i^-,r_i^+]$ such that $p_{m_i(v)}(vx) > \epsilon$, or equivalently 
$$\sum_{u \in \partial_t (v+B_{m_i(v)})} \omega_u(x) > \epsilon \sum_{u \in v+B_{m_i(v)}} \omega_u(x).$$
Since 
$$\partial_t (v+B_{m_i(v)}) = v+\partial_t (B_{m_i(v)}) \subseteq B_n + B_{r_q^+ +t} \subseteq 2B_n$$
this means 
$$\nu(\partial_t (v+B_{m_i(v)})) > \epsilon \nu(v+B_{m_i(v)}).$$
It follows that the carpets $\mathcal{U}_i = \lbrace v+B_{m_i(v)} : v \in V \rbrace$ form a stack of height $q$ over $V$ satisfying the conditions of the theorem \ref{techres} (with $X = 2B_n$). Hence
$$\sum_{u \in B_n} 1_A(ux)\omega_u(x) = \nu(V) \leq \delta \nu(2B_n) = \delta \sum_{u \in 2B_n} \omega_u(x).$$
and so
$$\mu(A) \leq \frac{\delta}{|B_n|} \int \sum_{u \in 2B_n} \omega_u(x) d\mu(x) = \delta\frac{|2B_n|}{|B_n|} \leq 2^d \delta.$$
Since $\delta$ can be made arbitrarily small we see that $\mu(A) =0$ forcing $\mu(A_\epsilon) =0$ for a contradiction.
\end{proof}

For fixed $v \in \Zd$ fixing $t \geq \rho(0,v)$ ensures that $\partial_t B_n \supseteq B_n \triangle (v + B_n)$ and hence that (nsFC) holds. Putting these results together completes the proof of theorem \ref{recET}. The techniques of Mortiss \cite{Mort1} combined with \ref{recET} give the following corollary, which is used to prove the main result of section \ref{prodact}.

\section{Critical Dimension for $\Zd$-actions} \label{prodact}

We now have a varied collection of summing sequences in $\Zd$ for which the ergodic theorem holds, and hence for which the critical dimensions are invariants of metric isomorphism. In this section we restrict attention to these sequences in order to address the first question raised in the introduction: how do $\alpha$ and $\beta$ depend on the choice of summing sequence?

\subsection{Critical Dimension for Balls of Norms}

We begin by showing that the critical dimensions for balls of a norm are independent of the choice of norm. Since every norm on $\mathbb{R}^d$ is equivalent their balls $B_n = nB$, where $B$ is the unit ball, grow at the same rate. In addition, for some $k \in \mathbb{N}$ we have $k^{-1} B \subseteq B' \subseteq kB$. The latter ensures that the sequences $nB$ and $nB'$ are, in some sense, intertwined. We prove that the critical dimensions for any pair of sequences with these two properties are equivalent. 

The techniques used here hold for a countable group $G$, as in the introduction, so we temporarily return to that setting.

Let each of $\lbrace A_n \rbrace_{n=1}^\infty$ and $\lbrace A_n' \rbrace_{n=1}^\infty$ be an increasing sequence of subsets of $G$. We shall say these sequences are \emph{interweaving} if for all $n \in \mathbb{N}$ there exists $N,N' \in \mathbb{N}$ such that $A_n \subseteq A_{N}'$ and $A_n' \subseteq A_{N'}$.

Given two interweaving sequences $\lbrace A_n \rbrace_{n=1}^\infty$ and $\lbrace A_n' \rbrace_{n=1}^\infty$ let 
\begin{align*}
m(n) = \max \left( k \geq 0 : A_k' \subseteq A_n \right) & & \text{and} & & m'(n) = \max \left( k \geq 0 : A_k \subseteq A_n' \right)
\end{align*}
where for technical reasons we take $A_0 = \emptyset = A_0'$. Then both $m(n)$ and $m'(n)$ are increasing with $n$ and diverge as $n \rightarrow \infty$. We say these sequences have \emph{comparable growth} if there exists $C \in (0,1)$ such that for all $n$ sufficiently large
\begin{align*}
C \leq \frac{\big| A_{m(n)}' \big|}{|A_n|} \leq C^{-1} & & \text{and} & & C \leq \frac{\big| A_{m'(n)} \big|}{|A_n'|} \leq C^{-1}.
\end{align*}

\begin{prop} \label{=CD}
Let $G$ be a countable group with a non-singular ergodic action on a standard finite measure space $(X,\mu)$. Let $\lbrace A_n\rbrace_{n==1}^\infty$ and $\lbrace A_n'\rbrace_{n=1}^\infty$ be a pair of interweaving sequences in $G$ of comparable growth. Then $L_t = L_t'$ and $U_t = U_t'$. In particular the two sequences give the same upper and lower critical dimensions.
\end{prop}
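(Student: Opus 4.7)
The plan is to transfer the $\liminf$/$\limsup$ conditions defining $L_t$ and $U_t$ across the two summing sequences by sandwiching the partial sums between blocks of the other family of comparable size. The two key inclusions are $A_{m(n)}' \subseteq A_n$ (from the definition of $m$) and, in the opposite direction, $A_k' \subseteq A_{\tilde{m}'(k)}$, where $\tilde{m}'(k) := \min\{n : A_k' \subseteq A_n\}$ exists by the interweaving hypothesis. Analogous functions $m'$ and $\tilde{m}$ play the symmetric role.

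For $L_t = L_t'$, I would take $x \in L_t'$ and use $A_{m(n)}' \subseteq A_n$ together with $|A_{m(n)}'| \geq C|A_n|$ (from comparable growth) to obtain
\[
\frac{1}{|A_n|^t}\sum_{g \in A_n}\omega_g(x) \;\geq\; K \cdot \frac{1}{|A_{m(n)}'|^t}\sum_{g \in A_{m(n)}'}\omega_g(x),
\]
for a positive constant $K = K(C,t)$ (equal to $C^t$ for $t \geq 0$, or $1$ for $t \leq 0$, using also $|A_{m(n)}'| \leq |A_n|$ in the latter case). Since $m(n) \to \infty$ and $x \in L_t'$, the right-hand side has strictly positive $\liminf$, so $x \in L_t$. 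The reverse containment follows by swapping the two sequences and using $A_{m'(n)} \subseteq A_n'$.

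For $U_t = U_t'$ the argument runs in parallel but uses the opposite inclusion $A_k' \subseteq A_{\tilde{m}'(k)}$ to upper-bound $\sum_{g \in A_k'}\omega_g(x)$. If $x \in U_t$, then $\sum_{g \in A_{\tilde{m}'(k)}}\omega_g(x) = o(|A_{\tilde{m}'(k)}|^t)$ as $k \to \infty$, whence $\sum_{g \in A_k'}\omega_g(x) = o(|A_{\tilde{m}'(k)}|^t)$ as well. To replace $|A_{\tilde{m}'(k)}|^t$ with $|A_k'|^t$ and conclude $x \in U_t'$, one needs, for $t \geq 0$, a uniform bound $|A_{\tilde{m}'(k)}|/|A_k'| \leq K'$ depending only on $C$; for $t \leq 0$ the containment itself already gives the correct direction of inequality and no such bound is needed.

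The main obstacle is therefore the lemma that $|A_{\tilde{m}'(k)}|/|A_k'|$ is bounded in $k$. My plan is to set $k^\ast := m(\tilde{m}'(k))$, observe $k^\ast \geq k$ and that $\tilde{m}'$ is constant on $[k, k^\ast]$, then apply comparable growth at index $\tilde{m}'(k)$ to get $|A_{\tilde{m}'(k)}| \leq C^{-1}|A_{k^\ast}'|$. The remaining task is to bound $|A_{k^\ast}'|/|A_k'|$: when $A_{k^\ast}' \subsetneq A_{\tilde{m}'(k)}$, a second application of comparable growth at index $\tilde{m}'(k) - 1$ (using $A_k' \not\subseteq A_{\tilde{m}'(k)-1}$ by minimality of $\tilde{m}'(k)$, which forces $m(\tilde{m}'(k)-1) \leq k-1$) yields $|A_{k^\ast}'| \leq C^{-2}|A_{k-1}'|$ and hence $|A_{\tilde{m}'(k)}|/|A_k'| \leq C^{-3}$; the equality case $A_{k^\ast}' = A_{\tilde{m}'(k)}$ is handled by a similar but more delicate argument tracing the simultaneous jumps in the two sequences. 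The iterated use of comparable growth across constancy intervals of $\tilde{m}'$ is the subtle book-keeping step: each jump in one sequence must be matched by a jump in the other, and invoking comparable growth on both sides simultaneously precludes pathological scaling.
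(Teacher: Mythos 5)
Your first half (the $L_t$ case) is correct and is essentially the paper's own argument: the paper writes out exactly the chain $\frac{1}{|A_n|^t}\sum_{g\in A_n}\omega_g \ge C^{|t|}\,\frac{1}{|A'_{m(n)}|^t}\sum_{g\in A'_{m(n)}}\omega_g$ together with $m(n)\to\infty$, and only this half (it dismisses the upper case as ``similar''). The problem is your second half. For $U_t$ with $t>0$ you reduce everything to the lemma that $|A_{\tilde{m}'(k)}|/|A'_k|$ is bounded, and that lemma is not a consequence of interweaving plus comparable growth. Counterexample: in $G=\mathbb{Z}$ take every set in both sequences to be an initial segment $\{0,1,\dots,s-1\}$, so inclusion is comparison of cardinalities; choose $a_1<a_2<\cdots$ with $a_{j+1}=100\,a_j$, let the cardinalities occurring in $\{A_n\}$ be the numbers $2^{a_j+i}$ for $0\le i\le 9$, $j\ge 1$, and let those occurring in $\{A'_n\}$ be the same numbers together with $2^{a_j+10}$ for every $j$. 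These sequences are interweaving and have comparable growth with $C=\tfrac{1}{2}$: the largest set of the other family contained in a given set has the same cardinality, except inside the extra primed sets of cardinality $2^{a_j+10}$, where it has exactly half the cardinality. Yet for the primed index $k$ with $|A'_k|=2^{a_j+10}$ the smallest $A_n$ containing $A'_k$ has cardinality $2^{a_{j+1}}$, so $|A_{\tilde{m}'(k)}|/|A'_k|=2^{a_{j+1}-a_j-10}\to\infty$. Hence no book-keeping of the kind you sketch can succeed; in fact your strict-containment case already misfires, because $A'_{k^\ast}\supseteq A'_k\not\subseteq A_{\tilde{m}'(k)-1}$, so comparable growth at index $\tilde{m}'(k)-1$ gives no control on $|A'_{k^\ast}|$, and the ``equality case'' you postpone is exactly the configuration realised by the counterexample.

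To be fair, you have located a genuine subtlety rather than invented one. The paper's proof details only the $L_t$ computation, and the literal mirror image of that computation controls $\limsup_k |A'_k|^{-t}\sum_{g\in A'_k}\omega_g$ only along the subsequence of indices in the image of $m$; bridging the skipped indices needs precisely an outer comparability of the type you tried to prove. The repair is not to derive it from the stated hypotheses (it does not follow), but either to strengthen the hypotheses --- e.g.\ require that for all large $k$ there is $n$ with $A'_k\subseteq A_n$ and $|A_n|\le C^{-1}|A'_k|$, and symmetrically --- or to verify this directly in the intended application: for balls of norms, $k^{-1}B\subseteq B'\subseteq kB$ gives $nB'\subseteq (nk)B$ with $|(nk)B\cap\mathbb{Z}^d|$ comparable to $|nB'\cap\mathbb{Z}^d|$, so the needed bound is immediate there, and with it your transfer argument for $U_t=U_t'$ (and the paper's ``similar'' claim) goes through for all $t$.
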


\begin{proof}
We just tackle the lower case as the upper case is similar. Observe that for all $n \geq N$ 
\begin{align*}
\frac{1}{|A_n|^t} \sum_{g \in A_n} \omega_g(x) 
\geq \left( \frac{\big| A_{m(n)}' \big|}{|A_n'|} \right)^t \frac{1}{\big| A_{m(n)}' \big|^t} \sum_{g \in A_{m(n)}'} \omega_g(x) 
\geq C^{|t|} \frac{1}{\big| A_{m(n)}' \big|^t} \sum_{g \in A_{m(n)}'} \omega_g(x) 
\end{align*}
and hence
\begin{align*}
\inf_{n \geq N} \frac{1}{|A_n|^t} \sum_{g \in A_n} \omega_g(x) 
\geq  C^{|t|} \inf_{n \geq N} \frac{1}{\big| A_{m(n)}' \big|^t} \sum_{g \in A_{m(n)}'} \omega_g(x)  \geq  C^{|t|} \inf_{n \geq m(N)} \frac{1}{\big| A_{n}' \big|^t} \sum_{g \in A_{n}'} \omega_g(x).
\end{align*}
By letting $N \rightarrow \infty$, and recalling that $m(N) \rightarrow \infty$ as $n \rightarrow \infty$ it follows that
\begin{align*}
\liminf_{n \rightarrow \infty} \frac{1}{|A_n|^t} \sum_{g \in A_n} \omega_g(x)
\geq  C^{|t|} \liminf_{n \rightarrow \infty} \frac{1}{\left| A_{n}' \right|^t} \sum_{g \in A_{n}'} \omega_g(x)
\end{align*}
and hence $L_t' \subseteq L_t$. The same argument holds with the sequences exchanged, so the claim follows.
\end{proof}

Returning to $G = \Zd$ let $B$ be the unit ball of the supremum norm and $B'$ of some other norm. Since $k^{-1} B \subseteq B' \subseteq kB$ for some $k$ these sequences are interweaving. To see that the sequences $nB$ and $nB'$ have comparable growth observe, for example, that
$$\frac{| \lfloor nk^{-1} \rfloor B|}{|nB'|} \geq \frac{| \lfloor nk^{-1} \rfloor B|}{|nkB|} = \left( \frac{2 \lfloor nk^{-1} \rfloor +1}{2nk+1} \right)^d \geq \left( \frac{  2nk^{-1} - 1}{2nk +1} \right)^d \rightarrow \frac{1}{k^{2d}}.$$
Of course a similar argument holds with the places of $B'$ and $B$ exchanged. Proposition \ref{=CD} then shows every sequences of balls of norms produces the same critical dimension.

As one might expect it is not too difficult to see that the comparable growth rates are necessary to the above argument. Consider, for example, the sequences $A_n' = [-n,n]^2$ and $A_n = [-\floor{e^n-1},\floor{e^n-1}] \times [-n,n]$ in $\mathbb{Z}^2$. We have $m(n) = n$ and hence 
$$\frac{\big| A_{m(n)}' \big|}{|A_n|} = \frac{(2n+1)^2}{(2n+1)(2\floor{e^n-1}+1)} \rightarrow 0.$$
This means that the argument used in the above proof fails if one attempts to compare balls of arbitrary rectangular metrics to those of norms. Next we show that these sequences give rise to different critical dimensions for numerous actions. 

\subsection{Critical Dimension for Product Measure Spaces}

We examine non-singular product actions, which are constructed as follows. Suppose that for each $1 \leq i \leq d$ we are given a non-singular transformation $T_i : X_i \rightarrow X_i$ on a probability space $(X_i,\mu_i)$, the factors of the product. We can define a non-singular $\mathbb{Z}^d$-action on the product measure space $X = X_1 \times ... \times X_d$ with measure $\mu = \mu_1 \otimes ...\otimes \mu_d$ via 
$$(u_1,...,u_d) \cdot (x_1,...,x_n) = (T_1^{u_1}x_1,...,T_d^{u_d}x_d).$$ 
This action is ergodic if and only if every $T_i$ is ergodic.

We consider the upper and lower critical dimensions with respect to sequences of rectangles $B_n = B_n^1 \times ... \times B_n^d$ where each $B_n^i = [-s_i(n),s_i(n)]$ for some increasing functions $s_i : \mathbb{N}_0 \rightarrow \mathbb{N}_0$. This setup includes rectangular summing sequences. For each $1 \leq i \leq d$ we write $\alpha_i$ and $\beta_i$ for the lower and upper critical dimensions of $T_i$ with respect to $[-n,n]$, taken in the space $(X_i,\mu_i)$. 

Given two increasing functions $s,s': \mathbb{N} \rightarrow \mathbb{N}_{>1}$ we write $s \lesssim s'$ and say $s$ is \textit{controlled by} $s'$ if 
$$\liminf_{n \rightarrow \infty} \frac{\log{s'(n)}}{\log{s(n)}} > 0.$$ $\lesssim$ defines a preorder on the space such functions, and this preorder is total. We can use $\lesssim$ to define an equivalence relation by declaring that $s$ and $s'$ have \textit{equivalent growth}, denoted $s \approx s'$, if both $s \lesssim s'$ and $s \lesssim s'$, i.e. if
$$0 < \liminf_{n \rightarrow \infty} \frac{\log{s'(n)}}{\log{s(n)}} \leq \limsup_{n \rightarrow \infty} \frac{\log{s'(n)}}{\log{s(n)}} < \infty.$$
This definition ensures that each function $\floor{n^t}$ for $t > 0$ is in the same equivalence class, but $\floor{e^n -1}$ is strictly greater.
 
Using the axiom of choice we may fix a representative of each equivalence class. Suppose that $\bar{s}$ is the representative of the equivalence class of $s$, then we set
\begin{align*}
a(s) = \liminf_{n \rightarrow \infty} \frac{\log{s(n)}}{\log{\bar{s}(n)}} & & \text{ and } & & b(s) = \limsup_{n \rightarrow \infty} \frac{\log{s(n)}}{\log{\bar{s}(n)}}.
\end{align*}
When referring to rectangles $B_n$ as above let us write $a_i = a(s_i)$ and $b_i = b(s_i)$ wherever there is no ambiguity.

Our first, foundational, result of this part provides bounds for the critical dimensions with respect to the rectangles $B_n$ in terms of the critical dimensions of the product transformations and the growth rates of the rectangle sides.

\begin{prop} \label{prodCD}
Let $\Zd$ act on a product space $(X,\mu)$ via a non-singular and ergodic product action, as described above. Let $D \subseteq [1,d]$ such that for each $i \in D$ the function $s_i$ is a greatest element in $\lbrace s_1,...,s_d \rbrace$ with respect to $\lesssim$. Then
\begin{align*}
\frac{\sum_{i \in D} a_i \alpha_i}{\sum_{i \in D} b_i} \leq \alpha(B_n) \leq \beta(B_n) \leq \frac{\sum_{i \in D} b_i \beta_i}{\sum_{i \in D} a_i} .
\end{align*} 
\end{prop}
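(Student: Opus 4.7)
The plan is to exploit the product structure of the action to reduce the sum over $B_n$ to a product of one-dimensional Hurewicz-type sums, apply the single-transformation critical dimensions $\alpha_i,\beta_i$ factor by factor, and then combine the resulting estimates using the growth data $a_i,b_i$. First I would factor: since the action is a product, $\omega_u(x)=\prod_{i=1}^d\omega^{(i)}_{u_i}(x_i)$, so $\sum_{u\in B_n}\omega_u(x)=\prod_{i=1}^d S_i(n,x_i)$ with $S_i(n,x_i):=\sum_{|u_i|\le s_i(n)}\omega^{(i)}_{u_i}(x_i)$, and $|B_n|=\prod_i(2s_i(n)+1)$. Because each $s_i(n)\to\infty$, the definitions of $\alpha_i,\beta_i$ applied with index $k=s_i(n)$ give, for any $\eta>0$ and a.e.\ $x_i$, the bound $S_i(n,x_i)\ge c_i(x_i)(2s_i(n)+1)^{\alpha_i-\eta}$ when $\alpha_i>\eta$ and $S_i(n,x_i)\le C_i(x_i)(2s_i(n)+1)^{\beta_i+\eta}$ eventually, with finite strictly positive $c_i,C_i$; and we always have the trivial bound $S_i\ge\omega^{(i)}_0>0$.

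For the lower bound $\alpha(B_n)\ge K_-:=(\sum_{i\in D}a_i\alpha_i)/(\sum_{i\in D}b_i)$, fix $t<K_-$ and choose $\eta>0$ small enough that the inequality is preserved when each $\alpha_i$ is replaced by $\alpha_i-\eta$. Multiplying the per-factor lower bounds for $i\in D$ (using the trivial bound where $\alpha_i\le\eta$) gives
$$\sum_{u\in B_n}\omega_u(x)\;\ge\;C(x)\prod_{i\in D}(2s_i(n)+1)^{\alpha_i-\eta}$$
eventually, almost surely. Dividing by $|B_n|^t$, taking logarithms, normalising by $\bar\ell(n):=\log(2\bar s(n)+1)$ where $\bar s$ is a representative of the common equivalence class of $\{s_i\}_{i\in D}$, and passing to a subsequence along which $\ell_i(n)/\bar\ell(n)\to\lambda_i$ for each $i$, the problem reduces to showing that $\sum_{i\in D}(\alpha_i-\eta-t)\lambda_i\ge t\sum_{j\notin D}\lambda_j$. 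For $i\in D$ the limit $\lambda_i$ lies in $[a_i,b_i]$, and the elementary inequality $\sum_{i\in D}c_i\lambda_i/\sum_{i\in D}\lambda_i\ge\sum_{i\in D}c_i a_i/\sum_{i\in D}b_i$ for nonnegative $c_i=\alpha_i-\eta$ supplies the positivity needed for the $D$-contribution, and hence $\mu(L_t)=1$. The upper bound on $\beta(B_n)$ is treated symmetrically, using the per-factor upper bounds on $S_i$ and the analogous weighted-average manipulation.

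The principal obstacle is the contribution of the non-maximal coordinates $j\notin D$. For such $j$ only $\liminf_n\ell_j(n)/\bar\ell(n)=0$ is guaranteed by the definition of $D$, while along other subsequences $\ell_j/\bar\ell$ may remain comparable to the maximal coordinates; the term $-t\sum_{j\notin D}\ell_j(n)$ in the logarithmic lower bound then threatens to depress the estimate below zero. The crux is to verify that along subsequences attaining the overall $\liminf$ of the ratio of interest (and analogously the $\limsup$ in the bound on $\beta$), the $j\notin D$ contributions are forced to be negligible: any sustained largeness of $\ell_j$ with $j\notin D$ relative to $\bar\ell$ along an extremal subsequence would be incompatible with the strict relation $s_i\not\lesssim s_j$ that distinguishes $i\in D$ from $j\notin D$. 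Making this precise and combining it with the weighted-average bookkeeping for the $D$-coordinates is the technical heart of the proof.
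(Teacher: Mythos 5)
Your opening moves coincide with the paper's: factorise $\omega_u(x)=\prod_i\omega^i_{u_i}(x_i)$ so the ratio splits as in (\ref{sumform}), estimate each one-dimensional sum by $s_i(n)^{\alpha_i-\eta}$ from below and $s_i(n)^{\beta_i+\eta}$ from above, and compare the coordinates in $D$ through the representative $\bar{s}$ using $\lambda_i\in[a_i,b_i]$. But the proposal then stops exactly at the step the proof actually needs. You yourself flag the coordinates $j\notin D$ as ``the technical heart'' and leave it unresolved, and the mechanism you sketch for resolving it does not work: the definition of $D$ gives only $\liminf_n \log s_j(n)/\log\bar{s}(n)=0$ for $j\notin D$, and there is no a priori relation between the subsequences along which this ratio is small and the subsequences along which $|B_n|^{-t}\sum_{u\in B_n}\omega_u(x)$ attains its $\liminf$ or $\limsup$ (the latter depend on $x$ and the derivatives $\omega^i_j$). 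Nothing in the hypotheses forces the $j\notin D$ contribution to be negligible along those extremal subsequences, so the asserted ``incompatibility with $s_i\not\lesssim s_j$'' is not a proof, and the proposed reduction $\sum_{i\in D}(\alpha_i-\eta-t)\lambda_i\geq t\sum_{j\notin D}\lambda_j$ is left without support precisely where $\lambda_j$ need not be $0$.

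The paper takes a different, more direct route with the slow coordinates, and never argues about extremal subsequences. Fixing $\delta>0$, it uses the bound $s_j(n)\leq\bar{s}(n)^{\delta}$ for $j\notin D$ and all large $n$, and then absorbs the slow sides into an $O(\delta)$ perturbation of the exponents of the fast sides: for the lower bound the slow factors of the numerator are simply bounded below by $1$ (they contain the term $u_j=0$, $\omega^j_0=1$), while the factors $s_j(n)^{t}$ coming from $|B_n|^{t}$ are swallowed by replacing $\alpha_i-2\epsilon$ with $\alpha_i+\eta-2\epsilon$, $\eta=O(\delta)$, for $i\in D$; for the upper bound each slow sum is at most a constant times $s_j(n)^{\beta_j+\delta}$ and is dominated by a small power $\bar{s}(n)^{c}$ carved out of the slack $\epsilon\sum_{i\in D}b_i$ in the exponent. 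If you rewrite your argument this way, the subsequence bookkeeping for $j\notin D$ disappears entirely. Note, though, that your instinct about where the difficulty lies is sound: passing from the $\liminf$ statement defining $j\notin D$ to the eventual domination $s_j(n)\leq\bar{s}(n)^{\delta}$ is itself the delicate point (it is immediate under the hypotheses of theorem \ref{=case}, where the limits $c_i$ exist), and any complete write-up must justify it rather than leave it, as your proposal does, as an unproven crux.
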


The inner bound is true by definition, the two outer bounds have slightly different proofs but both rely on two key ideas. 

The first is that a small portion of the growth from the fastest growing sides can be used to dominate and hence neglect the behaviour from the slower growing sides. The second idea is that the rates of growth from the fastest growing sides can be compared using the representative of their equivalence class, resulting in the weighted average of critical dimensions seen above.

We first prove the lower bound, where growth from the slow growing sides is absorbed by the faster sides.

\begin{lem}
Let $\Zd$ act on a product space $X$ via a non-singular and ergodic product action, as described above. Let $D \subseteq [1,d]$ such that for each $i \in D$ the function $s_i$ is a greatest element in $\lbrace s_1,...,s_d \rbrace$ with respect to $\lesssim$. Then
\begin{align*}
\alpha(B_n) \geq \frac{\sum_{i \in D} a_i \alpha_i}{\sum_{i \in D} b_i}.
\end{align*}
\end{lem}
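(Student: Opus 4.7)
The approach is to reduce to the one-dimensional critical dimensions using the product structure of the Radon-Nikod\'ym cocycle. Since $\omega_{(u_1,\ldots,u_d)}(x) = \prod_{i=1}^d \omega^{(i)}_{u_i}(x_i)$, the defining ratio for $L_t$ factorises,
\[
\frac{\sum_{u \in B_n} \omega_u(x)}{|B_n|^t} = \prod_{i=1}^d \frac{S_i(n)}{(2 s_i(n)+1)^t}, \qquad S_i(n) := \sum_{u = -s_i(n)}^{s_i(n)} \omega^{(i)}_{u}(x_i),
\]
so it suffices to give an almost-sure positive lower bound on the $\liminf$ of this product for every $t$ strictly less than $T := \frac{\sum_{i \in D} a_i \alpha_i}{\sum_{i \in D} b_i}$.

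Fix such a $t$ and choose $\epsilon > 0$ with $(t+\epsilon) \sum_{i \in D} b_i < \sum_{i \in D} a_i \alpha_i$. For each $i \in D$, since $\alpha_i$ is the lower critical dimension of $T_i$ with respect to $[-m,m]$ and $s_i(n) \to \infty$, for $\mu_i$-a.e.\ $x_i$ there is $c_i(x_i) > 0$ with $S_i(n) \geq c_i (2 s_i(n)+1)^{\alpha_i - \epsilon}$ eventually; for $j \notin D$ use the trivial bound $S_j(n) \geq \omega^{(j)}_0(x_j) = 1$. Writing $L(n) := \log \bar{s}(n)$ for the log of the chosen representative of the maximal equivalence class, taking logarithms yields
\[
\log\!\left(\frac{\sum_u \omega_u(x)}{|B_n|^t}\right) \;\geq\; O(1) + \sum_{i \in D} (\alpha_i - \epsilon - t) \log(2 s_i(n)+1) - t \sum_{j \notin D} \log(2 s_j(n)+1).
\]
For $i \in D$ the bounds $a_i - o(1) \leq \log(2 s_i(n)+1)/L(n) \leq b_i + o(1)$ hold. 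Minimising $\sum_{i \in D} (\alpha_i - \epsilon - t) \ell_i$ over $\ell_i \in [a_i, b_i]$ (at $\ell_i = a_i$ when $\alpha_i \geq t+\epsilon$, at $\ell_i = b_i$ otherwise), then using $a_i \leq b_i$ and $\alpha_i \in [0,1]$ to replace the cross-term $b_i\alpha_i$ by $a_i\alpha_i$ and the $a_i$ in the other coefficient by $b_i$, shows the minimum is at least $\sum_{i \in D} a_i \alpha_i - (t+\epsilon) \sum_{i \in D} b_i$, a positive quantity by our choice of $\epsilon$. Hence the $D$-block contributes at least a positive multiple of $L(n)$ to the log-ratio.

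The main technical obstacle is to absorb the remainder $-t \sum_{j \notin D} \log(2 s_j(n)+1)$. Because each non-$D$ $s_j$ is strictly controlled by the $D$-members ($s_j \lesssim s_i$ for all $i \in D$, but not $s_i \lesssim s_j$), we only have $\liminf_n \log s_j(n)/L(n) = 0$ rather than $\log s_j(n) = o(L(n))$; the argument concludes by showing that this negative term is nonetheless dominated by the linear-in-$L(n)$ growth from the $D$-block, possibly by using the non-equivalence to bound $\log s_j(n)$ on the same scale as the slack $\sum_{i \in D} a_i \alpha_i - (t+\epsilon)\sum_{i \in D} b_i$. Once this absorption is justified, $\log(\text{ratio}) \to +\infty$ almost surely, so the ratio is a.s.\ bounded below by a positive constant and $\mu(L_t) = 1$; letting $t \uparrow T$ yields $\alpha(B_n) \geq T$.
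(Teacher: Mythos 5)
Your overall route is the paper's route: factorise the cocycle, bound each $D$-coordinate below by $c_i(2s_i(n)+1)^{\alpha_i-\epsilon}$ eventually, and compare exponents through $\log \bar{s}(n)$ (your minimisation over the box $\prod_{i\in D}[a_i,b_i]$ is a correct substitute for the paper's direct manipulation of powers of $\bar{s}$). However, the step you yourself flag as ``the main technical obstacle'' is exactly where your argument stops, and it is a genuine gap rather than a deferrable routine estimate. For $j\notin D$ the definition of $D$ gives only $\liminf_n \log s_j(n)/\log\bar{s}(n)=0$, i.e.\ $s_j(n)\leq\bar{s}(n)^{\delta}$ along \emph{some} subsequence; but $L_t$ is defined by a liminf over \emph{all} $n$, so you must control $-t\sum_{j\notin D}\log(2s_j(n)+1)$ for every large $n$. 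Nothing in the hypotheses prevents an increasing $s_j$ from alternating between long plateaus (which force the liminf to be $0$) and stretches on which $\log s_j(n)$ is a fixed positive fraction $c$ of $\log\bar{s}(n)$; along the latter subsequence the denominator gains an extra factor of order $\bar{s}(n)^{ct}$ while that coordinate of the numerator contributes only about $\bar{s}(n)^{c(\alpha_j-\epsilon)}$, and your slack $\sum_{i\in D}a_i\alpha_i-(t+\epsilon)\sum_{i\in D}b_i$ tends to $0$ as $t\uparrow T$. So for $t$ near $T$ the proposed absorption fails along that subsequence, and one bad subsequence already removes the point from $L_t$; the non-equivalence of $s_j$ with the top class cannot rescue this, because it is a statement about a liminf only.

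What the paper uses at this point is the stronger assertion that for $j\notin D$ one has $s_j(n)\leq\bar{s}(n)^{\delta}$ for \emph{all} sufficiently large $n$, i.e.\ $\log s_j(n)=o(\log\bar{s}(n))$: then $|B_n|^{t}$ is bounded (up to a bounded factor) by $\bar{s}(n)^{t(d\delta+\sum_{i\in D}b_i)}$ eventually, the $j\notin D$ factors of the numerator are simply bounded below by $1$ via their $j=0$ term, and the slack absorbs everything exactly as you intend. That eventual bound is available in the situations the lemma is applied to (e.g.\ theorem \ref{=case}, where the limits $c_i$ exist and $c_j=0$ for $j\notin D$), but it is not a consequence of the liminf being zero, which is all that ``$s_j$ is not a greatest element'' provides — so your suspicion about this step is well founded. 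To complete your proof you would have to establish (or assume) this eventual domination of the non-$D$ sides; as written, the concluding ``once this absorption is justified'' leaves the essential step unproven.
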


\begin{proof}
Suppose 
$$t = \frac{\sum_{i \in D} (a_i - \epsilon) (\alpha_i - 2\epsilon)}{\sum_{i \in D} b_i}$$ 
for some $\epsilon > 0$. It follows from considering cylinder sets and applying Fubini's theorem that for $u \in \mathbb{Z}^d$ we have $\omega_u(x) = \prod_{i=1}^d \omega_{u_i}^i(x)$ where 
$$\omega_j^i(x) = \frac{d \mu_i \circ T_i^j}{d \mu_i}(x_i).$$
Then
\begin{align} \label{sumform}
\frac{1}{|B_n|^{t}} \sum_{u \in B_n} \omega_u = \frac{1}{2^{dt}} \frac{1}{(\prod_{i=1}^d s_i(n))^t} \prod_{i=1}^d  \sum_{j \in B_n^i} \omega^i_j.
\end{align}

Let $\bar{s}$ be the representative of the growth equivalence class of the $s_i$ with $i \in D$ and fix a positive real number $\delta$. For $i \not\in D$ we have 
$$\liminf_{n \rightarrow \infty} \frac{\log{s_i(n)}}{\log{\bar{s}(n)}} = 0.$$ 
Hence for $i \not\in D$ for all $n$ sufficiently large $s_i(n) \leq \bar{s}(n)^{\delta}$. By definition for $i \in D$ for large $n$ we must have $\bar{s}(n)^{a_i - \epsilon} \leq s(n) \leq \bar{s}(n)^{b_i + \delta}$. Therefore, for all sufficiently large $n$ we have 
$$\prod_{i=1}^d s_i(n) \leq (\bar{s}(n))^{d\delta + \sum_{i \in D} b_i}$$
and so for some $\eta = O(\delta)$ we have
$$\left( \prod_{i=1}^d s_i(n) \right)^t \leq (\bar{s}(n))^{\sum_{i \in D} (a_i - \epsilon) (\alpha_i + \eta - 2\epsilon)} \leq \prod_{i \in D} (s_i(n))^{\alpha_i + \eta - 2\epsilon}.$$

As we retain the freedom to shrink $\delta$ we can assume that each $\eta < \epsilon$ to deduce that for large enough $n$
\begin{align*} \label{lbound}
\frac{1}{|B_n|^{t}} \sum_{u \in B_n} \omega_u \geq \frac{1}{2^{dt}} \left(\prod_{i \not\in D}  \sum_{j \in B_n^i} \omega^i_j \right) \left( \prod_{i \in D} \frac{1}{s_i(n)^{\alpha_i - \epsilon}}  \sum_{j \in B_n^i} \omega^i_j \right).
\end{align*}
The first bracket is always at least $1$ and each term of the latter product diverges to infinity. Hence we see that $\alpha \geq t$, but since $\epsilon > 0$ was arbitrary the inequality follows.
\end{proof}

For the upper bound a little of the growth from the fast growing sides is used to dominate the slower sides.

\begin{lem}
Let $\Zd$ act on a product space $X$ via a non-singular and ergodic product action, as described above. Let $D \subseteq [1,d]$ such that for each $i \in D$ the function $s_i$ is a greatest element in $\lbrace s_1,...,s_d \rbrace$ with respect to $\lesssim$. Then
\begin{align*}
\beta(B_n) \leq \frac{\sum_{i \in D} b_i \beta_i}{\sum_{i \in D} a_i}.
\end{align*}
\end{lem}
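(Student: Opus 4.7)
The plan is to mirror the lower-bound lemma, replacing $\liminf$ by $\limsup$ and the $\alpha_i$ by the upper critical dimensions $\beta_i$. Fix a small $\epsilon>0$ and set
$$t \;=\; \frac{\sum_{i\in D}(b_i+\epsilon)(\beta_i+2\epsilon)}{\sum_{i\in D}(a_i-\epsilon)},$$
so that $t\to \frac{\sum_{i\in D}b_i\beta_i}{\sum_{i\in D}a_i}$ as $\epsilon\downarrow 0$; it therefore suffices to show $\mu(U_t)=1$ for every such $t$, i.e.\ that $\limsup_n |B_n|^{-t}\sum_{u\in B_n}\omega_u=0$ almost surely.

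I would begin from the product formula (\ref{sumform}) established in the proof of the lower-bound lemma. For each coordinate $i$, the definition of $\beta_i$ as the upper critical dimension of $T_i$ with respect to $[-n,n]$ gives, for $\mu_i$-a.e.\ $x_i$, that $\limsup_n s_i(n)^{-(\beta_i+\epsilon)}\sum_{j\in B_n^i}\omega_j^i=0$; thus one may write $\sum_{j\in B_n^i}\omega_j^i\leq \eta_i^n\, s_i(n)^{\beta_i+\epsilon}$ with $\eta_i^n\to 0$. Fubini gives these bounds jointly on a full-measure subset of $X$, so $\prod_i\eta_i^n\to 0$ there.

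Next I would compare the resulting upper bound on the numerator, $\prod_i s_i(n)^{\beta_i+\epsilon}$, against the denominator $(\prod_i s_i(n))^t$ via the representative $\bar{s}$ of the growth class of $\{s_i:i\in D\}$, exactly as in the lower bound. Using $\bar{s}^{a_i-\epsilon}\leq s_i\leq \bar{s}^{b_i+\epsilon}$ for $i\in D$ and $s_i\leq \bar{s}^{\epsilon}$ for $i\notin D$ (both valid for all sufficiently large $n$), the numerator is bounded above by $\bar{s}(n)^{\sum_{i\in D}(b_i+\epsilon)(\beta_i+\epsilon)+\epsilon\sum_{i\notin D}(\beta_i+\epsilon)}$ and the denominator bounded below by $\bar{s}(n)^{t\sum_{i\in D}(a_i-\epsilon)}$. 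A direct calculation shows that for our choice of $t$ the net $\bar{s}$-exponent in the ratio is $\leq 0$ once $\epsilon$ is small enough, so the ratio is bounded above by a constant. Multiplying by the vanishing factor $\prod_i\eta_i^n$ then yields the desired a.s.\ convergence to $0$; hence $\beta(B_n)\leq t$, and letting $\epsilon\downarrow 0$ proves the lemma.

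The main obstacle, as in the lower-bound case, is controlling the coordinates $i\notin D$, for which $\beta_i$ may well exceed $t$. A direct comparison $s_i^{\beta_i+\epsilon}$ against $s_i^t$ would blow up in that scenario, but the bound $s_i\leq \bar{s}^{\epsilon}$ forces the contribution of these coordinates to the $\bar{s}$-exponent to be $O(\epsilon)$, and so it is absorbed in the limit. Apart from this the argument is formally dual to the lower-bound proof.
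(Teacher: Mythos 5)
Your overall strategy is the paper's (dualize the lower-bound proof via (\ref{sumform}), bound each coordinate sum using the definition of $\beta_i$, and compare all side lengths to the class representative $\bar{s}$), and the first part of your argument is fine. The genuine gap is the step ``the net $\bar{s}$-exponent in the ratio is $\leq 0$ once $\epsilon$ is small enough''. With your choice of $t$ the denominator exponent is exactly $t\sum_{i\in D}(a_i-\epsilon)=\sum_{i\in D}(b_i+\epsilon)(\beta_i+2\epsilon)$, so the net exponent is
\[
\sum_{i\in D}(b_i+\epsilon)(\beta_i+\epsilon)+\epsilon\sum_{i\notin D}(\beta_i+\epsilon)-\sum_{i\in D}(b_i+\epsilon)(\beta_i+2\epsilon)
=\epsilon\Bigl[\sum_{i\notin D}(\beta_i+\epsilon)-\sum_{i\in D}(b_i+\epsilon)\Bigr].
\]
Shrinking $\epsilon$ only rescales this expression; it does not change the sign of the bracket, which is positive whenever $\sum_{i\notin D}\beta_i>\sum_{i\in D}b_i$ (for instance $d=3$, $D=\{1\}$, $s_1=\bar{s}$ so $b_1=1$, and $\beta_2=\beta_3=9/10$; recall also that upper critical dimensions may exceed $1$). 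In that case your bound on the ratio is a positive power of $\bar{s}(n)$, which diverges, and multiplying by $\prod_i\eta_i^n$ is inconclusive: the $\eta_i^n$ supplied by the definition of $\beta_i$ may tend to $0$ arbitrarily slowly. So, as written, the argument fails precisely in the only substantial case, namely when $D\neq[1,d]$ and the slow coordinates carry large $\beta_i$.

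The repair is what the paper does: decouple the two smallness parameters. Fix $\epsilon$ (hence $t$) first, and only afterwards choose $\delta>0$, depending on $\epsilon$ and on the $\beta_i$ for $i\notin D$, to control the slow coordinates: use $s_i(n)\leq\bar{s}(n)^{\delta}$ for $i\notin D$ and $s_i(n)\geq\bar{s}(n)^{a_i-\delta}$ for $i\in D$, so the net exponent becomes $-\epsilon\sum_{i\in D}b_i+O(\delta)$, which is negative for $\delta$ small. Concretely, the paper reserves a factor $\bar{s}(n)^{c}$ with $c$ of order $\epsilon\sum_{i\in D}b_i/(d-|D|)$ for each coordinate $i\notin D$, notes that eventually $\bar{s}(n)^{c}\geq s_i(n)^{\beta_i+\delta}$, and then each slow factor $s_i(n)^{-(\beta_i+\delta)}\sum_{j\in B_n^i}\omega_j^i\to0$ by the definition of $\beta_i$, with no appeal to the rates $\eta_i^n$. (Like the paper, you also tacitly need $\beta_i<\infty$ for $i\notin D$, since those exponents enter your bound even though they do not appear in the statement.) With this modification your proof coincides with the paper's.
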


\begin{proof}
The result is trivial if any $\beta_i = \infty$, so assume not. Suppose 
$$t = \frac{\sum_{i \in D} (b_i + \epsilon) (\beta_i + 2\epsilon)}{\sum_{i \in D} a_i}$$ 
for some $\epsilon > 0$. Let $\bar{s}$ be the representative of the $s_i$ with $i \in D$ and fix $\delta > 0$. By definition for $i \in D$ and $n$ sufficiently large $\bar{s}(n)^{a_i - \delta} \leq s(n) \leq \bar{s}(n)^{b_i+\epsilon}$. Hence for these $n$
$$\prod_{i=1}^d s_i(n) \geq \bar{s}(n)^{- |D|\delta + \sum_{i \in D} a_i}$$
and so for some $\eta = O(\delta)$ we have
$$\left( \prod_{i=1}^d s_i(n) \right)^t 
\geq \bar{s}(n)^{- \eta + \sum_{i \in D} (b_i + \epsilon) (\beta_i + 2\epsilon)} 
\geq \bar{s}(n)^{- \eta + \epsilon\sum_{i \in D} b_i } \left(\prod_{i \in D} s_i(n)^{\beta_i + \epsilon}\right).$$
By shrinking $\delta$ we can assume that $c = \frac{1}{d-|D|} \left( \epsilon\sum_{i \in D} b_i - \eta \right) > 0$ and use (\ref{sumform}) to deduce that for large $n$
\begin{align*} 
\frac{1}{|B_n|^{t}} \sum_{u \in B_n} \omega_u \leq \frac{1}{2^{dt}} \left(\prod_{i \not\in D} \frac{1}{\bar{s}(n)^c}  \sum_{j \in B_n^i} \omega^i_j \right) \left( \prod_{i \in D} \frac{1}{s_i(n)^{\beta_i + \epsilon}}  \sum_{j \in B_n^i} \omega^i_j \right).
\end{align*}
For each $i \not\in D$ eventually $\bar{s}(n)^c \geq s_i(n)^{\beta_i+\delta}$ and so each term in the first product tends to $0$. Similarly with each of the terms in the second product. Hence we see that $\beta < t$, but since $\epsilon > 0$ was arbitrary the inequality follows.
\end{proof}

This completes the proof of proposition \ref{prodCD}. We can combine it with the integer theory to start to answer the earlier question about dependence on the summing sequence. The integer theory predominantly sums over the sets $[1,n]$ so it will be useful to examine what the critical dimension of a $\mathbb{Z}$-action with respect to $[1,n]$ says about the critical dimension with respect to $[-n,n]$.

Let $T : X \rightarrow X$ be a non-singular transformation describing a $\mathbb{Z}$-action. We shall refer to the critical dimensions of $T$ with the summing sets $[1,n]$ as \textit{standard} and denote the lower and upper standard critical dimensions by $\alpha_+$ and $\beta_+$ respectively. We will denote the lower and upper standard critical dimensions of $T^{-1}$ by $\alpha_-$ and $\beta_-$. Let $L_t^+$, $L_t^-$ denote $L_t$ for $T$ and $T^{-1}$ respectively, with the standard summing sets, and similarly with $U_t$.

\begin{lem} \label{stansym}
Let $T : X \rightarrow X$ determine a non-singular $\mathbb{Z}$-action. Let $\alpha$ and $\beta$ be the critical dimensions with respect to $[-n,n]$. Then $\max(\alpha_+,\alpha_-) \leq \alpha \leq \beta \leq \max( \beta_+, \beta_-)$.
\end{lem}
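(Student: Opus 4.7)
The plan is to decompose the symmetric range as $[-n,n] = [-n,-1] \cup \{0\} \cup [1,n]$ and identify the resulting two tails with the standard sums of $T$ and $T^{-1}$. The key observation is that, for $k \geq 1$, the Radon-Nikodym derivative $\omega_{-k}$ for the $T$-action agrees with the $k$-th derivative $\tilde{\omega}_k$ for the $\mathbb{Z}$-action generated by $T^{-1}$, while $\omega_0 \equiv 1$. This gives
\begin{equation*}
\sum_{u \in [-n,n]} \omega_u(x) = \sum_{k=1}^n \omega_k(x) + \sum_{k=1}^n \tilde{\omega}_k(x) + 1,
\end{equation*}
and since $(2n+1)/n \to 2$, the normalisations $(2n+1)^t$ and $n^t$ differ only by a bounded positive factor tending to $2^t$.

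For the lower bound I would fix $t < \alpha_+$. On the full-measure set $L_t^+$ one has $\liminf_n n^{-t} \sum_{k=1}^n \omega_k(x) > 0$, and from the trivial inequality
\begin{equation*}
\frac{1}{(2n+1)^t} \sum_{u \in [-n,n]} \omega_u(x) \geq \left(\frac{n}{2n+1}\right)^t \cdot \frac{1}{n^t} \sum_{k=1}^n \omega_k(x)
\end{equation*}
one concludes $L_t^+ \subseteq L_t$ up to null sets, hence $t \leq \alpha$. Letting $t \uparrow \alpha_+$ gives $\alpha_+ \leq \alpha$, and the symmetric argument with $\tilde{\omega}_k$ in place of $\omega_k$ yields $\alpha_- \leq \alpha$. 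The middle inequality $\alpha \leq \beta$ is immediate from the definitions, as remarked in the introduction.

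For the upper bound I would take $t > \max(\beta_+,\beta_-)$, so that $\mu(U_t^+) = \mu(U_t^-) = 1$. On the intersection of these full-measure sets, the decomposition above and the boundedness of $(n/(2n+1))^t$ show that each of the three summands in $(2n+1)^{-t} \sum_{u \in [-n,n]} \omega_u(x)$ tends to $0$, placing $x$ in $U_t$. Thus $\beta \leq t$, and sending $t \downarrow \max(\beta_+,\beta_-)$ completes the proof. There is no serious obstacle here: the argument reduces to the identification $\omega_{-k} = \tilde{\omega}_k$ and the observation that the multiplicative factor arising from $|[-n,n]| \sim 2n$ is invisible to critical dimensions, which detect only polynomial growth rates.
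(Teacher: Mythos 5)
Your proposal is correct and follows essentially the same route as the paper: split $\sum_{i=-n}^{n}\omega_i$ into the negative and positive one-sided sums, identify the negative part with the standard sums for $T^{-1}$, and use that liminf is superadditive and limsup subadditive while the normalisation mismatch $(2n+1)^t$ versus $n^t$ is a harmless bounded factor. The only cosmetic difference is that you argue with $t$ slightly below $\alpha_{\pm}$ and slightly above $\max(\beta_+,\beta_-)$, whereas the paper phrases the same estimates as the set inclusions $L_t \supseteq L_t^+ \cup L_t^-$ and $U_t \supseteq U_t^+ \cap U_t^-$.
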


\begin{proof}
We first prove the result for the lower critical dimension. Observe that
\begin{align*}
\liminf_{n \rightarrow \infty} \frac{1}{(2n+1)^t} \sum_{i = -n}^n \omega_i(x) 
&= \frac{1}{2^t} \liminf_{n \rightarrow \infty} \frac{1}{n^t} \sum_{i = -n}^{-1} \omega_i(x) + \frac{1}{n^t} \sum_{i = 1}^{n} \omega_i(x) \\
&\geq \frac{1}{2^t} \liminf_{n \rightarrow \infty} \frac{1}{n^t} \sum_{i = -n}^{-1} \omega_i(x) +\frac{1}{2^t} \liminf_{n \rightarrow \infty}  \frac{1}{n^t} \sum_{i = 1}^{n} \omega_i(x).
\end{align*}
Hence $L_t \supseteq L_t^+ \cup L_t^-$ and the result follows. In the other case we get
\begin{align*}
\limsup_{n \rightarrow \infty} \frac{1}{(2n+1)^t} \sum_{i = -n}^n \omega_i(x) 
&\leq \frac{1}{2^t} \limsup_{n \rightarrow \infty} \frac{1}{n^t} \sum_{i = -n}^{-1} \omega_i(x) +\frac{1}{2^t} \limsup_{n \rightarrow \infty}  \frac{1}{n^t} \sum_{i = 1}^{n} \omega_i(x).
\end{align*}
Therefore $U_t \supseteq U_t^+ \cap U_t^-$ and we are done.
\end{proof}

In particular, if the standard upper and lower critical dimensions of $T$ agree and those of $T^{-1}$ do also then $\alpha = \max(\alpha_+,\alpha_-) = \beta$. The following theorem of Mortiss and Dooley provides a number of situations where this is the case. 

\begin{theorem} [see \cite{DoolMort1}]
Let $T$ denote the odometer transformation on $(\prod_{i=1}^\infty \mathbb{Z}_2, \bigotimes_{i=1}^\infty \mu_i)$. Then the lower and upper critical dimensions are given by
\begin{align*}
\alpha = \liminf_{n \rightarrow \infty} - \frac{1}{n} \sum_{i=1}^n \log_2{\mu_i(x_i)} = \liminf_{n \rightarrow \infty} \frac{1}{n} \sum_{i=1}^n H(\mu_i) 
\end{align*}
and
\begin{align*}
\beta = \limsup_{n \rightarrow \infty} - \frac{1}{n} \sum_{i=1}^n \log_2{\mu_i(x_i)} = \limsup_{n \rightarrow \infty} \frac{1}{n} \sum_{i=1}^n H(\mu_i) 
\end{align*}
for a.e. $x \in X$, where $H(\mu_i) = - \sum_{j=0}^1 \mu_i(j) \log_2(\mu_i(j))$, the entropy of the measure $\mu_i$.
\end{theorem}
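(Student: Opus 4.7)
The plan is to reduce the statement to two essentially separate problems: a combinatorial identification of the Birkhoff-type sum $\sum_k \omega_k(x)$ for the odometer along the subsequence $N = 2^n$, and a strong law of large numbers for the independent random variables $Y_i(x) = -\log_2 \mu_i(x_i)$.

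First, I would exploit the dyadic structure of the odometer. The orbit segment $\{T^k x : 0 \leq k < 2^n\}$ partitions naturally according to whether adding $k$ produces a carry out of coordinate $n$: on the no-carry block the first $n$ coordinates of $T^k x$ enumerate one contiguous subset of $\{0,1\}^n$ (in the base-2 ordering) with later coordinates matching those of $x$, while on the carry block the first $n$ coordinates complete the enumeration and the later coordinates match those of $T^{2^n}x$. Using $\omega_k(x) = \prod_i \mu_i((T^k x)_i)/\mu_i(x_i)$ and regrouping gives
\begin{align*}
\sum_{k=0}^{2^n - 1} \omega_k(x) \;=\; \bigl(S_1(x,n) + C(x,n)\,S_2(x,n)\bigr) \prod_{i=1}^n \mu_i(x_i)^{-1},
\end{align*}
where $S_1 + S_2 = 1$ and $C(x,n) = \prod_{i>n} \mu_i((T^{2^n}x)_i)/\mu_i(x_i)$ is a finite product of ratios depending only on positions $n+1, \ldots, n+j(x,n)$, with $j(x,n)$ the length of the run of $1$s in $x$ starting at coordinate $n+1$. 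By independence of the coordinates under $\mu$, a Borel--Cantelli argument forces $j(x,n) = O(\log^2 n)$ almost surely, so $\log C(x,n) = o(n)$ a.s. and hence
\begin{align*}
\log_2 \sum_{k=0}^{2^n - 1} \omega_k(x) \;=\; -\sum_{i=1}^n \log_2 \mu_i(x_i) + o(n) \quad \text{a.s.}
\end{align*}

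Next, I would pass from the subsequence $N = 2^n$ to general $N$ by a sandwich argument. For any $N \in [2^n, 2^{n+1}]$ the sum $\sum_{k=1}^N \omega_k(x)$ lies between the dyadic sums at $2^n$ and $2^{n+1}$, whose cardinalities differ by a factor of $2$; thus the $\liminf$ and $\limsup$ defining the standard critical dimensions are unaffected by restricting to dyadic times. Combining with the asymptotic above identifies $\alpha_+$ and $\beta_+$ as the $\liminf$ and $\limsup$ of $-\frac{1}{n}\sum_{i=1}^n \log_2 \mu_i(x_i)$ almost surely.

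Finally, I would apply Kolmogorov's strong law of large numbers. The $x_i$ are independent with law $\mu_i$, so the variables $Y_i(x) = -\log_2 \mu_i(x_i)$ are independent with mean $H(\mu_i)$. Under the standing hypothesis that the $\mu_i$ are uniformly bounded away from $\{0,1\}$ (hence $\operatorname{Var}(Y_i)$ is uniformly bounded and $\sum_i \operatorname{Var}(Y_i)/i^2 < \infty$), SLLN gives $\frac{1}{n}\sum_{i=1}^n (Y_i - H(\mu_i)) \to 0$ a.s., so the $\liminf$ and $\limsup$ of $\frac{1}{n}\sum Y_i$ agree with those of $\frac{1}{n}\sum H(\mu_i)$, completing the proof. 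The main obstacle throughout is the first step: one must pin down exactly which elements appear in the orbit over a dyadic block and control the carry correction $C(x,n)$; the rest is essentially formal once that quantitative identity is established.
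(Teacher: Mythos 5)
This theorem is not proved in the paper at all: it is imported verbatim from Dooley and Mortiss \cite{DoolMort1}, so the only meaningful comparison is with that source, and your proposal is essentially a correct reconstruction of the argument used there for product odometers. Your two ingredients are the right ones and fit together: over a dyadic block the first $n$ coordinates of $T^kx$, $0\le k<2^n$, enumerate $\{0,1\}^n$ exactly once, giving $\sum_{k=0}^{2^n-1}\omega_k(x)=(S_1+CS_2)\prod_{i=1}^n\mu_i(x_i)^{-1}$ with $S_1+S_2=1$; since $\min(1,C)\le S_1+CS_2\le\max(1,C)$ (the one-line bound you leave implicit), the logarithmic error is at most $|\log_2 C(x,n)|$, which your Borel--Cantelli run-length estimate makes $o(n)$ a.s.; and positivity of the $\omega_k$ plus the fact that $N^t$ changes only by a factor $2^{\pm t}$ between consecutive dyadic times shows that the sets $L_t$ and $U_t$ are unchanged by restricting to $N=2^n$, so the identification of $\alpha_+$ and $\beta_+$ goes through. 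Two small remarks: the run-length/carry estimate (and indeed non-singularity of the odometer) relies on the standing hypothesis of \cite{DoolMort1} that the $\mu_i(j)$ are uniformly bounded away from $0$ and $1$, so you are right to invoke it, although for the SLLN step it is actually unnecessary --- $p\mapsto p\log_2^2 p$ is bounded on $(0,1)$, so the variances of $Y_i$ are automatically uniformly bounded; and the a.e.\ constancy of $\liminf_n-\frac1n\sum_{i\le n}\log_2\mu_i(x_i)$, needed to equate it with the (deterministic) critical dimension, comes for free from that same SLLN comparison, so your argument is complete as sketched.
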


The entropy $H(\mu)$ of the measure $\mu$ on $\lbrace 0,1 \rbrace$ can be chosen to take any value between $0$ and $1$, by varying $p \in (0,1)$ where $\mu(0) = p$. It is clear that for many choices of product measure $\bigotimes_{i=1}^\infty \mu_i$ the sequence $\frac{1}{n}\sum_{i=1}^n H(\mu_i)$ converges as $n \rightarrow \infty$. In this case the upper and lower critical dimensions are equal. Moreover any value in $(0,1)$ can be achieved by the dimensions.

Another consequence theorem is that for an odometer action $T$ on $(\prod_{i=1}^\infty \mathbb{Z}_2, \bigotimes_{i=1}^\infty \mu_i)$ the inverse $T^{-1}$ has the same upper and lower critical dimensions as $T$. This follows from how $T^{-1}$ can also be considered as an odometer on the same space, with the roles of $0$ and $1$ reversed, and the fact that $H(\mu_i) = H(\nu_i)$ where $\nu_i(0) = 1 - \mu_i(0)$.

These observations combined with lemma \ref{stansym} ensure we can produce examples of transformations with a single critical dimension $\alpha = \beta = \gamma$ with respect to $[-n,n]$ for any $\gamma \in (0,1)$. 

If we input these $T_i$ into proposition \ref{prodCD}, and choose the $s_i$ to ensure $a_i = b_i$ for all $i \in D$, then the resulting actions will have critical dimension
\begin{align*}
\gamma(B_n)  = \frac{\sum_{i \in D} a_i \gamma_i}{\sum_{i \in D} a_i}.
\end{align*}
We are now equipped to examine some specific examples which answer some of our earlier questions.

\subsubsection*{Values taken by the Critical Dimension}

The simplest examples to consider are those where $s_1(n) = s_2(n) = ... = n$ which all satisfy $a(s_i) = 1$ with respect the natural choice of representative of their class, $\bar{s}(n) = n$. Then in the above circumstances there is a single critical dimension
$$\gamma = \frac{\gamma_1 + ... + \gamma_d}{d}.$$
This in turn means that for any $d$ and $r \in (0,1)$ we can produce a $\mathbb{Z}^d$-action with critical dimension $r$.

\subsubsection*{Dependence on the choice of summing set}

Consider a $\Zto$-action, constructed via the method above, and it's critical dimension with respect to $[-n,n] \times [-\floor{e^n-1}, \floor{e^n-1}]$. Here $s_2$ grows strictly faster than $s_1$ and, with the sensible choice representatives, the critical dimension is seen to be $\gamma = \gamma_2$. This, taken with the last example, shows that the critical dimension very much depends on the choice of summing sequence. It also shows that critical dimensions of the factors can be deduced from those of the product action and vice-versa.

In fact, any desired weighting of the critical dimensions can be achieved. Let $t_i \in [0,1]$ such that ${t_1 + ... + t_d = 1}$, and take $s_i(n) = n$ if $t_i = 0$ and $s_i(n) = \floor{(e^{n} - 1)^{t_i}}$ otherwise. Then the critical dimension of the product action with respect to corresponding summing sequence is given by $\gamma = t_1 \gamma_1 + ... + t_d \gamma_d$. Moreover, each such summing sequence is rectangular, and so each of these weightings is an invariant of metric isomorphism.

\subsection{Extension to non-product measures}

In the last part we assumed that the measure on  $X = X_1 \times ... \times X_d$ was given by a product of measures on each $X_i$. In this part we will remove that assumption. We still let $\mathbb{Z}^d \curvearrowright (X,\mu)$ via the product action but $\mu$ is not necessarily a product measure. We consider the critical dimensions of the $T_i$ with respect to the projection measures $\mu_i = \mu \circ \pi_i^{-1}$, where $\pi_i(x) = x_i$. Then we show the following, which the author expected but was unable to find in the literature.

\begin{prop} \label{equiprod}
Let $\mathbb{Z}^d \curvearrowright (X,\mu)$ via the product action $(u, x) \mapsto (T_1^{u_1}x_1,...,T_d^{u_d}x_d)$, which is assumed to be non-singular and ergodic. Then $\mu \sim \mu_1 \otimes ... \otimes \mu_d$.
\end{prop}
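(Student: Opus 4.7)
The proof proceeds by induction on $d$; writing $X_1 \times \cdots \times X_d = (X_1 \times \cdots \times X_{d-1}) \times X_d$ and applying the $d = 2$ case reduces the general statement to that of two factors, so I describe the argument for $d = 2$. Set $\nu = \mu_1 \otimes \mu_2$, which is automatically $\mathbb{Z}^2$-quasi-invariant since each $T_i$ is $\mu_i$-non-singular. Two preliminary ergodicity facts will be needed. First, each $T_i$ is $\mu_i$-ergodic: a $T_i$-invariant subset of $X_i$ lifts via $\pi_i^{-1}$ to a $\mathbb{Z}^2$-invariant subset of $X$, whose $\mu$-measure therefore lies in $\{0,1\}$. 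Second, $\nu$ is $\mathbb{Z}^2$-ergodic: for a literally $\mathbb{Z}^2$-invariant $A \subseteq X$ the identity $A^{T_2 x_2} = A^{x_2}$ makes $x_2 \mapsto [\mathbf{1}_{A^{x_2}}] \in L^0(X_1, \mu_1)$ a $T_2$-invariant Borel map, hence constant $\mu_2$-a.e.\ by $T_2$-ergodicity; the common slice $A_0$ is then $T_1$-invariant modulo $\mu_1$-null, so $\nu(A) = \mu_1(A_0) \in \{0,1\}$.

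Next I disintegrate $\mu = \int \mu^{x_2} \otimes \delta_{x_2} \, d\mu_2(x_2)$. Computing the disintegration of $\mu \circ (I, T_2)$ by a change of variables and comparing with that of $\omega_{(I,T_2)} \mu$ via the uniqueness of disintegration yields the cocycle
\[
\mu^{T_2 x_2} \sim \mu^{x_2} \quad \text{for } \mu_2\text{-a.e. } x_2.
\]
Combined with $T_2$-ergodicity of $\mu_2$, this gives the following dichotomy: for every Borel $B \subseteq X_1$, either $\mu^{x_2}(B) = 0$ for $\mu_2$-a.e.\ $x_2$, or $\mu^{x_2}(B) > 0$ for $\mu_2$-a.e.\ $x_2$.

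The crux is to show $\mu \ll \nu$, which I would do by contradiction. Otherwise, taking the $\mathbb{Z}^2$-orbit of a witness produces a literally $\mathbb{Z}^2$-invariant $A$ with $\mu(A) = 1$ and $\nu(A) = 0$. By Fubini and the disintegration, $\mu_1(A^{y_2}) = 0$ and $\mu^{y_2}(A^{y_2}) = 1$ for $\mu_2$-a.e.\ $y_2$; choose a single $y_2$ in the intersection of all these full-measure sets and apply the dichotomy to the fixed Borel set $B = A^{y_2} \subseteq X_1$. Since $\mu^{y_2}(B) > 0$, the positive alternative holds, so $\mu^{x_2}(B) > 0$ for $\mu_2$-a.e.\ $x_2$, and integrating yields $\mu_1(B) = \int \mu^{x_2}(B) \, d\mu_2 > 0$, contradicting $\mu_1(B) = 0$. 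Once $\mu \ll \nu$ is established, write $\mu = \rho \nu$; a short non-singular calculation shows that $\{\rho > 0\}$ is $\mathbb{Z}^2$-invariant modulo $\nu$-null, so by the $\nu$-ergodicity from the first paragraph its $\nu$-measure lies in $\{0,1\}$, and it cannot be $0$ because $\mu(X) = 1$. Hence $\rho > 0$ $\nu$-a.e.\ and $\mu \sim \nu$. The main obstacle is this final contradiction: one must extract the cocycle cleanly from the disintegration, pick a $y_2$ that simultaneously satisfies all the almost-sure conditions, and apply the ergodicity dichotomy to the single fixed Borel set $B = A^{y_2}$ at exactly the right moment.
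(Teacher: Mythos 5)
Your setup (reduction to $d=2$, the disintegration over the second coordinate, and the cocycle $\mu^{T_2x_2}\sim\mu^{x_2}$ obtained by comparing the two disintegrations of $\mu\circ(\mathrm{id}\times T_2)$) is sound and runs parallel to the paper's proof, which derives the same a.e.\ equivalence of fibres. The final step from $\mu\ll\nu$ to $\mu\sim\nu$ via ergodicity of $\nu$ is also fine. But the crucial step of your contradiction argument has a genuine gap. The dichotomy you prove is: for each \emph{fixed} Borel $B\subseteq X_1$ there is a $\mu_2$-null exceptional set $N(B)$ off which either $\mu^{x_2}(B)=0$ for a.e.\ $x_2$ or $\mu^{x_2}(B)>0$ for a.e.\ $x_2$. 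You then apply it to $B=A^{y_2}$ and conclude from $\mu^{y_2}(B)=1$ that the positive alternative holds. This inference is invalid: the exceptional null set belongs to the set $B$, and here $B$ is only defined after $y_2$ has been chosen, so $y_2$ cannot be assumed to avoid $N(A^{y_2})$ — the choice is circular. Since $\{y_2\}$ is typically $\mu_2$-null, the negative alternative ($\mu^{x_2}(A^{y_2})=0$ for $\mu_2$-a.e.\ $x_2$) is perfectly consistent with $\mu^{y_2}(A^{y_2})=1$; indeed this is exactly the configuration one expects if $\mu$ were singular to $\nu$ (fibre measures concentrated on sets invisible to $\mu_1$), so ruling it out is the actual content of the proposition, not something the dichotomy at a single point can deliver. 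Nothing you have established by that stage excludes it: the cocycle only relates fibres along a single $T_2$-orbit, and the mod-$\mu_1$ constancy of the slices $A^{x_2}$ (your $\nu$-ergodicity argument) cannot be transferred to a statement about $\mu^{x_2}$ of those slices without already knowing $\mu^{x_2}\ll\mu_1$.

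What is missing is an upgrade from ``$\mu^{T_2x_2}\sim\mu^{x_2}$ a.e.'' to mutual equivalence of the fibre measures over a.e.\ \emph{pair} of base points. This is precisely how the paper proceeds: it shows the equivalence classes $M_y=\{\tilde y:\mu^{y}\sim\mu^{\tilde y}\}$ are measurable and invariant under the base transformation, uses ergodicity of the base marginal to find one class of full measure, and then invokes the Brown--Dooley criterion (Proposition \ref{disprod}) to conclude $\mu\sim\mu_1\otimes\mu_2$. If you want to keep your self-contained route to $\mu\ll\nu$, you would need to first prove something of that strength (e.g.\ that $\mu^{x_2}\sim\mu^{x_2'}$ for $\mu_2\otimes\mu_2$-a.e.\ $(x_2,x_2')$, or that a fixed measure dominates a.e.\ fibre); with only the per-set dichotomy and its $B$-dependent null sets, the Fubini contradiction does not go through.
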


In particular this means that each such action is metrically isomorphic to a product action with product measure, and for a rectangular summing sequence $B_n$ combining proposition \ref{prodCD} with corollary \ref{CDinvar} gives the following result, which implies theorem \ref{=case}.

\newpage
\begin{theorem} \label{CDprodact}
Let $\Zd$ act on a product measurable space $X$ with measure $\mu$ via a non-singular and ergodic product action, and $B_1 \subseteq B_2 \subseteq...$ be a rectangular summing sequence. Let $D \subseteq [1,d]$ such that for each $i \in D$ the function $s_i$ is a greatest element in $\lbrace s_1,...,s_d \rbrace$ with respect to $\lesssim$. Then
\begin{align*}
\frac{\sum_{i \in D} a_i \alpha_i}{\sum_{i \in D} b_i} \leq \alpha(B_n) \leq \beta(B_n) \leq \frac{\sum_{i \in D} b_i \beta_i}{\sum_{i \in D} a_i}
\end{align*}
where $\alpha_i$ and $\beta_i$ are the critical dimensions of $T_i$ with respect to $[-n,n]$ and the projection measures $\mu_i = \mu \circ \pi_i^{-1}$.
\end{theorem}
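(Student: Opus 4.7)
The approach is to reduce the statement to the product-measure case already handled by proposition \ref{prodCD}, using proposition \ref{equiprod} to replace $\mu$ by an equivalent product measure and corollary \ref{CDinvar} to argue this replacement does not change the critical dimensions.

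First, apply proposition \ref{equiprod} to obtain $\mu \sim \mu^*$, where $\mu^* := \mu_1 \otimes \cdots \otimes \mu_d$ and $\mu_i = \mu \circ \pi_i^{-1}$ is the projection measure onto the $i$-th factor. Since the product $\Zd$-action $(u,x) \mapsto (T_1^{u_1}x_1,\ldots,T_d^{u_d}x_d)$ is defined identically on both measure spaces and the two measures share null sets, the identity map on $X$ realises a non-singular isomorphism between the systems $(X,\mu,\Zd)$ and $(X,\mu^*,\Zd)$. These systems are therefore metrically isomorphic. Because $(B_n)$ is a rectangular summing sequence, corollary \ref{CDinvar} yields
$$\alpha_{(X,\mu)}(B_n) = \alpha_{(X,\mu^*)}(B_n) \qquad \text{and} \qquad \beta_{(X,\mu)}(B_n) = \beta_{(X,\mu^*)}(B_n).$$

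It now suffices to compute the critical dimensions of the genuine product system $(X,\mu^*,\Zd)$, and this fits exactly the hypothesis of proposition \ref{prodCD} with factor spaces $(X_i,\mu_i)$. Direct application of that proposition gives
$$\frac{\sum_{i \in D} a_i \alpha_i}{\sum_{i \in D} b_i} \leq \alpha(B_n) \leq \beta(B_n) \leq \frac{\sum_{i \in D} b_i \beta_i}{\sum_{i \in D} a_i},$$
where the $\alpha_i, \beta_i$ are the critical dimensions of $T_i$ on $(X_i,\mu_i)$ with respect to $[-n,n]$. Since the factor measures of $\mu^*$ are by construction the projection measures $\mu \circ \pi_i^{-1}$, these match the $\alpha_i,\beta_i$ in the statement, finishing the proof.

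The only delicate point is verifying that corollary \ref{CDinvar} really applies to a change of reference measure within an equivalence class (not just to genuine measure-space isomorphisms), but this is exactly what Mortiss' argument, powered by theorem \ref{recET}, delivers: under the substitution $\mu \leadsto \mu^*$ with $h = d\mu/d\mu^*$, the Radon-Nikodým cocycles are related by $\omega^\mu_g(x) = h(gx)\, h(x)^{-1}\, \omega^{\mu^*}_g(x)$, and the ergodic theorem forces the sums $\sum_{g \in B_n} \omega^\mu_g(x)$ and $\sum_{g \in B_n}\omega^{\mu^*}_g(x)$ to have the same polynomial growth rate almost everywhere. Any effort beyond citing the prior results is therefore confined to this standard bookkeeping.
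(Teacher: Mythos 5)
Your proposal is correct and follows essentially the same route as the paper: the paper also deduces the theorem by using proposition \ref{equiprod} to replace $\mu$ with the equivalent product measure $\mu_1 \otimes \cdots \otimes \mu_d$, observing this gives a metric isomorphism, and then combining corollary \ref{CDinvar} with proposition \ref{prodCD}. Your closing remark about why a change to an equivalent measure falls under corollary \ref{CDinvar} is precisely the point the paper handles implicitly via Mortiss' argument and theorem \ref{recET}, so no genuine gap remains.
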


By induction it is enough to consider the case $d = 2$ to prove proposition \ref{equiprod}. For notational simplicity take $Y = X_1$ and $Z = X_2$ so that $X = Y \times Z$ (as measure spaces). Our strategy is to use the following result of Brown and Dooley, in our notation.

\begin{prop}[see \cite{BroDoo1}] \label{disprod}
With $\mu$, $\mu_1$ and $\mu_2$ as described above we have that $\mu \sim \mu_1 \otimes \mu_2$ if and only if there is a disintegration
$$\mu = \int_{Y} \mu^{y} d\mu_1(y)$$
such that for all $y,y' \in Y$ we have $\mu^y \sim \mu^{y'}$.
\end{prop}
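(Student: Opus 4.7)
The plan is a pure assembly: combine Proposition~\ref{equiprod} (measure equivalence), Corollary~\ref{CDinvar} (metric-isomorphism invariance for rectangular summing sequences), and Proposition~\ref{prodCD} (bounds for genuine product measures). No new analysis of the action itself is needed.

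First I would apply Proposition~\ref{equiprod} to produce $\mu \sim \mu_1 \otimes \cdots \otimes \mu_d$. Write $\nu = \mu_1 \otimes \cdots \otimes \mu_d$. Since $\mu$ and $\nu$ are equivalent, the given $\Zd$-action is simultaneously non-singular with respect to both (null sets are shared), and the two systems share the same invariant $\sigma$-algebra modulo null sets, so both are ergodic. The identity map $(X,\mu) \to (X,\nu)$ is then a bimeasurable bijection carrying the measure class of $\mu$ onto that of $\nu$ while intertwining the two actions; that is, a metric isomorphism of non-singular $\Zd$-systems.

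Since the summing sequence $B_n$ is rectangular by hypothesis, Corollary~\ref{CDinvar} applies and gives that $\alpha(B_n)$ and $\beta(B_n)$ for $(X,\mu)$ equal the corresponding quantities for $(X,\nu)$. The marginals of $\nu$ on each factor are by construction the measures $\mu_i$, so the single-coordinate critical dimensions $\alpha_i,\beta_i$ appearing in the hypothesis coincide whether computed directly in $(X_i,\mu_i)$ or inside the product picture $(X,\nu)$. Now apply Proposition~\ref{prodCD} to $\Zd \curvearrowright (X,\nu)$ to obtain the desired sandwich, which by the previous sentence transfers verbatim to $(X,\mu)$.

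The principal effort has been expended elsewhere in the section: Proposition~\ref{equiprod} supplies the measure equivalence, and Corollary~\ref{CDinvar} depends on the rectangular ergodic theorem (Theorem~\ref{recET}). Given those, no further obstacle arises here --- in particular one is spared any direct manipulation of the Radon-Nikod\'ym derivatives $\omega_g$ associated to $\mu$, which need not factorize across coordinates.
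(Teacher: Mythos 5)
There is a fundamental mismatch here: you have written a proof of the wrong statement. The proposition you were asked to prove is the Brown--Dooley criterion (Proposition~\ref{disprod}): the measure $\mu$ on $Y\times Z$ is equivalent to the product $\mu_1\otimes\mu_2$ of its marginals if and only if it admits a disintegration $\mu=\int_Y\mu^y\,d\mu_1(y)$ in which all the conditional measures $\mu^y$ are mutually equivalent. Your proposal never mentions disintegrations or conditional measures at all; instead it assembles Proposition~\ref{equiprod}, Corollary~\ref{CDinvar} and Proposition~\ref{prodCD} into a ``sandwich'' bound on critical dimensions. That is an argument for Theorem~\ref{CDprodact}, not for Proposition~\ref{disprod}. (For the record, the paper does not prove Proposition~\ref{disprod} at all --- it is imported from \cite{BroDoo1} as a known result.)

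Worse, the route you chose is circular as a putative proof of Proposition~\ref{disprod}: the paper's proof of Proposition~\ref{equiprod} is built \emph{on top of} Proposition~\ref{disprod} (it constructs the disintegration via the Fremlin disintegration theorem, shows $\mu^{T_1y}\sim\mu^y$ for a.e.\ $y$, and then invokes Proposition~\ref{disprod} on an ergodic component to conclude $\mu\sim\mu_1\otimes\mu_2$). So you cannot use Proposition~\ref{equiprod} as an ingredient here. An actual proof of Proposition~\ref{disprod} would have to work directly with the Radon--Nikod\'ym derivative $h=d\mu/d(\mu_1\otimes\mu_2)$ in the forward direction (showing each $\mu^y$ is then equivalent to $\mu_2$, hence the $\mu^y$ are mutually equivalent), and in the converse direction show that mutual equivalence of the fibre measures forces each $\mu^y$ to be equivalent to the marginal $\mu_2$, whence $\mu$ and $\mu_1\otimes\mu_2$ share the same null sets. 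None of that appears in your proposal.
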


We show that there is a set of $\mu_1$-measure $1$ for which the measures $\mu^y$ are equivalent. This is enough to apply the proposition and deduce the result. The proof of this claim will rely on the disintegration theorem below, which follows from the significantly more general theorem 453K in \cite{Frem4}.

\begin{theorem}
Let $A$ and $B$ be Radon spaces, $\lambda$ be a probability measure on $A$, $\pi : A \to B$ a measurable function and $\nu = \lambda \circ \pi^{-1}$. Then there exists a $\nu$-a.e unique family of probability measures $\lbrace \lambda^{b} \rbrace_{b \in B}$ on $A$ such that
\begin{enumerate}[\normalfont (i)]
\item For each Borel set $E \subset A$ the map $b \mapsto \lambda^b(E)$ is measurable.
\item For $\nu$-a.e. $b \in B$ we have $\lambda^b(A \setminus \pi^{-1}(b)) = 0$.
\item For every Borel function $f : A \to [0,\infty]$ we have 
$$\int_A f(a) \, d\lambda(a) = \int_B \int_{\pi^{-1}(a)} f(a) \, d\lambda^b(a) \, d\nu(b).$$
\end{enumerate}
\end{theorem}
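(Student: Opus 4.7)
The plan is to construct the disintegration via conditional expectations on a countable generating algebra and then extend to the full Borel $\sigma$-algebra using the Radon property. First I would choose a countable algebra $\mathcal{A}$ of Borel subsets of $A$ which generates the Borel $\sigma$-algebra (available because probability measures on Radon spaces are carried by a Polish-like skeleton). For each $E \in \mathcal{A}$ the conditional expectation $\mathbb{E}[\mathbf{1}_E \mid \pi^{-1}(\mathcal{B}_B)]$ is $\pi^{-1}(\mathcal{B}_B)$-measurable and hence factors as $g_E \circ \pi$ for some Borel measurable $g_E : B \to [0,1]$. Because $\mathcal{A}$ is countable one can find a single $\nu$-null set $N \subseteq B$ outside of which the set function $E \mapsto g_E(b)$ is well-defined, finitely additive, monotone, and satisfies $g_A(b) = 1$ simultaneously for all $E \in \mathcal{A}$.

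Next I would upgrade finite to countable additivity outside $N$ by a dominated-convergence argument applied to the conditional expectations: if $E_n \downarrow \emptyset$ in $\mathcal{A}$ then $g_{E_n}(b) \to 0$ for $\nu$-a.e.\ $b$, and enlarging $N$ by a further $\nu$-null set handles every such sequence in the countable algebra at once. Carath\'eodory extension then yields a Borel probability measure $\lambda^b$ on $A$ for each $b \notin N$; set $\lambda^b$ to be an arbitrary fixed probability measure for $b \in N$. Measurability of $b \mapsto \lambda^b(E)$ for arbitrary Borel $E$ in (i) follows by a monotone class argument from the measurability of $g_E$ on $\mathcal{A}$, and the integration identity (iii) follows from the defining property of conditional expectation, first for indicators of $E \in \mathcal{A}$, then for all Borel indicators, and finally for non-negative Borel $f$ by the usual truncation and monotone convergence. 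Uniqueness up to $\nu$-null sets is standard: any two candidates agree on $\mathcal{A}$ outside a null set and hence on the full Borel $\sigma$-algebra outside a null set.

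The delicate step will be establishing (ii), that $\lambda^b$ concentrates on the fibre $\pi^{-1}(b)$ for $\nu$-a.e.\ $b$. My plan is to exploit a countable family $\mathcal{C}$ of Borel subsets of $B$ which separates points, available because a Radon space is countably separated, and to observe that for each $C \in \mathcal{C}$ one has $\lambda^b(\pi^{-1}(C)) = g_{\pi^{-1}(C)}(b) = \mathbf{1}_C(b)$ outside a $\nu$-null set. Taking the countable union of these null sets and using that the atoms of the $\sigma$-algebra generated by $\mathcal{C}$ are singletons yields the fibre property. This last point is where the Radon structure of $B$ really enters, and it is precisely what allows the statement to be read off from Fremlin's more general Theorem 453K cited in the paper; the rest of the argument is essentially the classical Rokhlin disintegration construction carried out in the Radon rather than the Polish setting.
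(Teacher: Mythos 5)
The step where you pass from finite to countable additivity is a genuine gap. Once you fix a countable algebra $\mathcal{A}$, the collection of decreasing sequences $(E_n) \subseteq \mathcal{A}$ with $E_n \downarrow \emptyset$ has the cardinality of the continuum (each such sequence is a point of $\mathcal{A}^{\mathbb{N}}$), so you cannot ``enlarge $N$ by a further $\nu$-null set'' to handle \emph{every} such sequence at once: martingale/dominated convergence gives $g_{E_n}(b) \to 0$ for $\nu$-a.e.\ $b$ only one sequence at a time, and a union over uncountably many exceptional null sets need not be null. This is exactly the classical pitfall in disintegration arguments, and it is the point where the Radon (inner regularity) hypothesis on $A$ must enter, not merely the fibre property (ii): the standard repair is to choose, alongside $\mathcal{A}$, a countable family of compact sets approximating each $E \in \mathcal{A}$ from inside in $\lambda$-measure, transfer these countably many approximation inequalities to the functions $g_E$ outside a single null set, and then invoke the fact that a finitely additive set function on an algebra that is inner regular with respect to a compact class is automatically countably additive (equivalently, one runs Rokhlin's argument through conditional distribution functions after embedding into $[0,1]$, where countable additivity comes from compactness). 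Without some such compactness input the Carath\'eodory extension you appeal to is simply not available, so as written the construction of $\lambda^b$ does not go through.

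Two further remarks. Your argument for (ii) assumes that a Radon space is countably separated; this holds for the Polish spaces to which the paper actually applies the theorem, but it is not a consequence of the definition of a Radon space, so in the stated generality you must either add that hypothesis or argue (ii) differently. Finally, for comparison: the paper gives no proof of this statement at all, citing it as a special case of Fremlin's Theorem 453K, so any complete argument is necessarily a different route from the paper's; but to be complete, yours must supply the compactness step above, which is the real content hiding behind the citation.
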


\begin{proof}[Proof of \ref{equiprod}]
Since Polish spaces are Radon spaces we can take $A = X$, $B = Y$ and $\pi = \pi_1$ in the disintegration theorem and hence may write $$\mu = \int_{Y} \mu^{y} d\mu_1(y)$$ as in \ref{disprod}. Here $\mu^y(E) = \lambda^y(\lbrace y \rbrace \times E)$, and hence this collection must also be unique $\mu_1$ almost surely.

Let $D \subset Y$ and $E \subset Z$ be measurable. Then
\begin{align*}
	(\mu \circ T_1)(D \times E) 
= 	\int_{T_1D} \mu^y(E) \, d\mu_1(y) 
= 	\int_{D} \mu^{T_1y}(E) \, d(\mu_1\circ T_1)(y) 
\end{align*}
and
\begin{align*}
	(\mu \circ T_1)(D \times E) 
&= 	\int_{X} \ind_{D \times E} \frac{d \mu \circ T_1}{d \mu} \, d\mu \\
&= 	\int_Y \int_Z \ind_{D \times E}(y,z) \frac{d \mu \circ T_1}{d \mu}(y,z) \, d\mu^{y}(z) 	\, d\mu_1(y) \\
&= 	\int_Y \int_Z \ind_{D \times E}(y,z) \frac{d \mu \circ T_1}{d \mu}(y,z) \frac{d \mu_1 }{d \mu_1 \circ T_1}(y) \, d\mu^{y}(z) 	\, d(\mu_1 \circ T_1)(y) \\
\end{align*}
Fix $E$. Taking $$D = \left\lbrace y \in Y : \int_E \frac{d \mu \circ T_1}{d \mu}(y,z)  \, d\mu^{y}(z) > \frac{d \mu_1  \circ T_1}{d \mu_1}(y) \mu^{T_1y}(E) \right\rbrace$$
and combining this with the above shows that $\mu_1 \circ T_1(D) = 0$ and hence $\mu_1(D) = 0$. By reversing the inequality we may deduce that for each measurable $E$ 
$$\int_E \frac{d \mu \circ T_1}{d \mu}(y,z)  \, d\mu^{y}(z) = \frac{d \mu_1 \circ T_1}{d \mu_1 }(y) \mu^{T_1y}(E) \quad \mu_1\text{-a.s..}$$
Since $T_1$ is non-singular with respect to both $\mu$ and $\mu_1$ this shows that for $\mu_1$-a.e. $y \in Y$ we have $\mu^{T_1y}(E) = 0$ if and only if $\mu^{y}(E) = 0$.

Consider the collection $\mathcal{C}$ of finite unions of open balls with rational radii and centres in a countable dense subset of $Z$. This collection is countable, and hence we can find a set $Y'$ of full $\mu_1$-measure for which for all $y \in Y'$ and all $E \in 
\mathcal{C}$ we have $\mu^{T_1y}(E) = 0$ if and only if $\mu^{y}(E) = 0$. Now observe that with $y$ fixed the functions $\ind_{>0}(\mu^{T_1y}(E))$ and $\ind_{>0}(\mu^{y}(E))$ define measures on $Z$. For $y \in Y'$ these measures agree on $\mathcal{C}$ and by monotone convergence must agree on all open sets of $Z$. It follows that for all $y \in Y'$ they agree for all measurable $E$, i.e. for all $y \in Y'$ we have $\mu^{T_1y} \sim \mu^{y}$. Without loss of generality we may assume that $Y'$ is $T_1$-invariant.

Now consider the equivalence classes $M_y = \lbrace \tilde{y} \in Y' : \mu^y \sim \mu^{\tilde{y}} \rbrace$, where $y \in Y'$, which partition $M_y$. An argument using indicator functions, similar to the one above, can be used to show that each $M_y$ is measurable. Moreover, our conclusion above shows that each $M_y$ is $T_1$-invariant and by the ergodicity of $\mu_1$ (inherited from $\mu$) some (unique) $M_y$ has measure $1$. We can then apply the result the result of Brown and Dooley (with $Y$ replaced by $M_y$) to see that $\mu \sim \mu_1 \otimes \mu_2$.
\end{proof}

\section{Further Questions}

Underlying much of this paper is the question of how the choice of summing sequence affects not only the critical dimension but the ergodic theorem for $\Zd$. On the one hand, for the sequences $[0,n]^d$ in $\Zd$ with $d > 1$ there is the counterexample to the ratio ergodic theorem \cite{Kren1}, found by Brunel and Krengel. On the other, for balls of norms or for rectangular summing sequences the ergodic theorem holds. If the sets in a summing sequence have the Besicovitch property and the modified doubling condition then it seems likely that Hochman's method will work, so long as some analogue of the finite coarse dimension property can be found. It is in proving this latter condition that both cases make use of some natural structure of $\Zd$. It would be interesting to know exactly what we require from a summing sequence in $\Zd$ for the ergodic theorem to hold. The fact that large parts of Hochman's approach can be applied to rectangles suggests that the theorems for norms and rectangles may both be special cases of a wider phenomenon.

On the critical dimension, we have shown in the case of product actions that the critical dimension for rectangles can be decomposed into a weighted average of the critical dimensions, for the projected measures, of maps corresponding to $e_1,...,e_n$. It is an open question whether this extends more generally, for example the critical dimension of each $e_i$ can be calculated on $(X,\mu)$ as a $\mathbb{Z}$-action regardless of whether the $\Zd$-action is a product action. Therefore it is reasonable to ask how the critical dimension of the $\Zd$-action is related to those of the generators.

\medbreak
\flushleft\textit{Acknowledgements.}
K.\ Jarrett was supported by an EPSRC studentship for the duration of this work. 
\medbreak

\end{document}